\def\affaddr{}
\newcommand{\email}[1]{{\small\texttt{#1}}}
\def\ps@pprintTitle{%
 \def\@oddfoot{\footnotesize\itshape\hfill\today}%
}%
\theoremstyle{definition}
\newtheorem{definition}{Definition}[section]
\newtheorem{remark}[definition]{Remark}
\newtheorem{example}[definition]{Example}
\theoremstyle{plain}
\newtheorem{lemma}[definition]{Lemma}
\newtheorem{proposition}[definition]{Proposition}
\newtheorem{theorem}[definition]{Theorem}
\newtheorem{corollary}[definition]{Corollary}
\newcommand{\bxi}{{{\mathbf{\xi}}}}
\newcommand{\bmu}{{{\mathbf{\mu}}}} 
\newcommand{\bnu}{{{\mathbf{\nu}}}} 
\newcommand{\blambda}{{\mbox{\boldmath$\lambda$}}}
\newcommand{\bpartial}{{\mbox{\boldmath$\partial$}}}
\newcommand{\balpha}{{\alpha}}
\newcommand{\bbeta}{{\beta}}
\newcommand{\bgamma}{{\gamma}}
\newcommand{\bLambda}{\mathbf{\Lambda}}
\newcommand{\pp}[1]{d_{#1}} 
\def\nil{o}
\def\rank{{\rm rank}}
\def\mM{{\tt M}}
\def\xb{\mathbf{x}}
\def\fb{\mathbf{f}}
\def\gb{\mathbf{g}}
\def\d{{\bf d}}
\def\bx{{\bf x}}
\def\Cc{{\mathcal C}}
\def\Nc{{\mathcal N}} 
\def\bz{{\bf z}}
\def\bM{{\bf M}}
\def\e{{\bf e}}
\def\K{{\mathbb C}}
\def\KK{{\mathbb K}}
\def\I{{\mathcal I}}
\def\QQ{{Q}}
\def\C{{\mathbb C}}
\def\N{{\mathbb N}}
\def\Q{{\mathbb Q}}
\def\ord{\mathrm{ord}}
\def\sp{{\rm{span}}}
\def\DDD{{\mathscr D}}
\def\m{{\mathfrak m}}
\def\mult{{\delta}}
\def\bmu{{\mu}}
\begin{document}

\title{On deflation and multiplicity structure}
\author{
\hspace{-0.75cm}\begin{tabular}{ccc}
 \begin{minipage}{5.5cm}
Jonathan D. Hauenstein\thanks{Research partly supported by DARPA YFA, NSF grant 
ACI-1460032, and Sloan Research Fellowship.}\\
      \affaddr{Department of Applied and Computational Mathematics and Statistics,}\\
       \affaddr{University of Notre Dame}\\
      \affaddr{Notre Dame, IN, 46556, USA}\\
       \email{hauenstein@nd.edu}
\end{minipage}
& 
\begin{minipage}{4.5cm}
Bernard Mourrain\\
\affaddr{Inria Sophia Antipolis
}\\
\affaddr{2004 route des Lucioles, B.P. 93,}\\
\affaddr{06902 Sophia Antipolis,}\\
\affaddr{Cedex France}\\
       \email{bernard.mourrain@inria.fr }
\end{minipage} 
& 
\begin{minipage}{6cm}
Agnes Szanto\thanks{Research partly supported by NSF grant CCF-1217557.}\\
       \affaddr{Department of Mathematics, }\\
       \affaddr{North~Carolina~State~University}\\
       \affaddr{Campus Box 8205,}\\
       \affaddr{Raleigh, NC, 27965, USA.}\\
       \email{aszanto@ncsu.edu}\\
\end{minipage}
\end{tabular}
}
\date{ }
\maketitle

\begin{abstract}
  This paper presents two new constructions related to singular solutions of polynomial systems. 
  The first is a new deflation method for  
  an isolated singular root. This construction uses a single linear
  differential form defined from the Jacobian matrix of the input, and
  defines the deflated system by applying this differential form to
  the original system.  The advantages of this new deflation is that it
  does not introduce new variables and the increase in the number of
  equations is linear in each iteration instead of the quadratic increase of previous
  methods. The second construction gives the coefficients of
  the so-called inverse system or dual basis, which defines the
  multiplicity structure at the singular root.  We present a system of
  equations in the original variables plus a relatively small number
  of new variables that completely deflates the root in one step.  We
  show that the isolated simple solutions of this new system 
  correspond to roots of the original system with given multiplicity structure up to a given order.  
  Both constructions are ``exact'' in that they permit one to 
  treat all conjugate roots simultaneously and can be used 
  in certification procedures for singular roots and 
  their multiplicity structure with respect to 
  an exact rational polynomial system.
\end{abstract}



\section{Introduction}

One issue when using numerical methods for solving polynomial
systems is the ill-conditioning and possibly erratic behavior 
of Newton's method near singular solutions.  
Regularization (deflation) techniques remove
the singular structure to restore
local quadratic convergence of~Newton's~method.

Our motivation for this work is twofold. On one hand, in a recent
paper \cite{AkHaSz2014}, two of the co-authors of the present paper
and their student studied a certification method for approximate roots
of exact overdetermined and singular polynomial systems, and wanted to
extend the method to certify the multiplicity structure at the root as
well. Since all these problems are ill-posed, in \cite{AkHaSz2014} a
hybrid symbolic-numeric approach was proposed, that included the exact
computation of a square polynomial system that had the original root
with multiplicity one. In certifying singular roots, this exact square
system was obtained from a deflation technique that added
subdeterminants of the Jacobian matrix to the system
iteratively. However, the multiplicity structure is destroyed by this
deflation technique, that is why it remained an open question how to
certify the multiplicity structure of singular roots of exact
polynomial systems.

Our second motivation was to find a method that simultaneously refines
the accuracy of a singular root and the parameters describing the
multiplicity structure at the root. In all previous numerical
approaches that approximate these parameters, they apply numerical
linear algebra to solve a linear system with coefficients depending on
the approximation of the coordinates of the singular root. Thus the
local convergence rate of the parameters was slowed from the quadratic
convergence of Newton's iteration applied to the singular roots. We
were interested if the parameters describing the multiplicity
structure can be simultaneously approximated with the coordinates of
the singular root using Newton's iteration. Techniques that
additionally provide information about the multiplicity structure of a
singular root can be applied to bifurcation analysis of ODEs and PDEs
(see, e.g. \cite{FriedmanHu2006,Haoetal2012}).  They can also be
helpful in computing the topological degree of a polynomial map
\cite{EiLe77} or for analyzing the topology of real algebraic curves (see
e.g. \cite{alberti:inria-00343110} and Example 6.2 in
\cite{mantzaflaris:inria-00556021}).

In the present paper, we first give an improved version of the
deflation method that can be used in the certification algorithm of
\cite{AkHaSz2014}, reducing the number of added equations at each
deflation iteration from quadratic to linear. We prove that applying a
single linear differential form to the input system, corresponding to
a generic kernel element of the Jacobian matrix, already reduces both
the multiplicity and the depth of the singular root. Furthermore, we
study how to use this new deflation technique to compute isosingular
decompositions introduced in~\cite{HauWam13}.

Secondly, we give a description of the multiplicity structure using a
polynomial number of parameters, and express these parameters together
with the coordinates of the singular point as the roots of a
multivariate polynomial system. We prove that this new polynomial
system has a root corresponding to the singular root but now with
multiplicity one, and the newly added coordinates describe the
multiplicity structure. Thus, this second approach completely deflates
the system in one step. The number of equations and variables in the
second construction depends polynomially on the number of variables
and equations of the input system and the multiplicity of the singular
root. Moreover, we also show that the isolated simple solutions of our
extended polynomial system correspond to roots of the original system
that have prescribed multiplicity structure up to a given order.

Both constructions are exact in the sense that approximations of the
coordinates of the singular point are only used to detect numerically
non-singular submatrices, and not in the coefficients of the
constructed polynomial systems.

This paper is an extended version of the ISSAC'15 conference paper \cite{Hauensteinetal2015}.

\subsection{Related work.}

 The treatment of singular roots is a critical issue for numerical analysis 
with a
large literature on methods that transform the problem into a new one
for which Newton-type methods converge quadratically~to~the~root.
 
Deflation techniques which add new equations in order to
reduce the multiplicity were considered in
\cite{Ojika1987199,Ojika1983463}.
By triangulating the Jacobian matrix at the (approximate) root,
new minors of the polynomial Jacobian matrix are added to the initial
system in order to reduce the multiplicity of the singular solution.

A similar approach is used in \cite{HauWam13} and \cite{GiuYak13}, 
where a maximal invertible block of the Jacobian matrix 
at the (approximate) root is computed and minors of the 
polynomial Jacobian matrix are added to the
initial system. 
For example, when the Jacobian matrix at the root vanishes,
all first derivatives of the input polynomials are added to the system
in both of these approaches.  
Moreover, it is shown in \cite{HauWam13} that deflation can be performed
at nonisolated solutions in which the process stabilizes to 
so-called {\em isosingular sets}. 
At each iteration of this deflation approach, 
the number of added equations can be taken to be $(N-r)\cdot(n-r)$,
where $N$ is the number of input polynomials,
$n$ is number of variables, 
and $r$ is the rank of the Jacobian at the root.

These methods repeatedly use their constructions 
until a system with a simple root is obtained.

In~\cite{Lecerf02}, a triangular presentation of the ideal in a
good position and derivations with respect to the leading variables are used
to iteratively reduce the multiplicity. This process is applied for p-adic
lifting with exact computation.


In other approaches, new variables and new equations are introduced simultaneously.
For example, in \cite{YAMAMOTO:1984-03-31}, 
new variables are introduced to describe some perturbations 
of the initial equations and some differentials which
vanish at the singular points.
This approach is also used in \cite{LiZhi2014}, where it is shown that
this iterated deflation process yields a system with a simple root.

In \cite{mantzaflaris:inria-00556021}, 
perturbation variables are 
also introduced in relation with the inverse system of the singular point
to obtain directly a deflated system with a simple root.
The perturbation is constructed from a monomial basis of the local
algebra at the multiple root.

In~\cite{lvz06,lvz08}, only variables for the differentials of the initial
system are introduced.
The analysis of this deflation is improved in \cite{DaytonLiZeng11},
where it is shown that the number of steps is bounded by the order
of the inverse system.
This type of deflation is also used in \cite{LiZhi2013}, for the special case
where the Jacobian matrix at the multiple root has rank $n-1$ (the 
breadth one case).

In these methods, at each step, 
both the number of variables and equations are increased,
but the new equations are linear in the newly added variables. 

The aforementioned deflation techniques usually break the structure of the local
ring at the singular point. The first method to compute the inverse system 
describing this structure is due to F.S. Macaulay \cite{mac1916}
and known as the dialytic method.
More recent algorithms for the construction of inverse systems are described
in \cite{Marinari:1995:GDM:220346.220368} which 
reduces the size of the intermediate linear systems (and exploited in
\cite{Stetter:1996:AZC:236869.236919}). 
In \cite{Leykin2008}, an approach related to the dialytic method is used to compute all isolated and embedded components 
of an algebraic set. 
The dialytic method had been further improved in~\cite{Mourrain97} and, 
more recently, in 
\cite{mantzaflaris:inria-00556021},
using an integration method. This technique reduces significantly the
cost of computing the inverse system, since it relies on the solution
of linear systems related to the inverse system truncated in some
degree and not on the number of monomials in this degree.
Singular solutions of polynomial systems have been
studied by analyzing multiplication matrices 
(e.g.,~\mbox{\cite{Corless97,Moller95,SGPhT09}}) via non-local methods, 
which apply to the zero-dimensional case.

The computation of inverse systems has also been used to approximate a 
multiple root. The dialytic method is used in~\cite{zeng05} and the relationship 
between the deflation approach and the inverse system is analyzed, 
exploited, and implemented in \cite{HSZ13}.
In~\cite{Pope2009606}, a minimization approach is used to reduce the value of
the equations and their derivatives at the approximate root, assuming a basis
of the inverse system is known. 
In \cite{GLSY07}, the certification of a multiple root with breadth one is obtained 
using $\alpha$-theorems.
In 
\cite{WuZhi2011}, the inverse system is constructed via
Macaulay's method, tables of multiplications are deduced, and their
eigenvalues are used to improve the approximated root. They 
show that the convergence is quadratic at the multiple root. 
In \cite{LiZhi12}, they show that in the breadth one case 
the parameters needed to describe the 
inverse system is small, and use it to compute the singular roots in \cite{LiZhi12b}.
The inverse system has further been exploited in deflation 
techniques in \cite{mantzaflaris:inria-00556021}.  
This is the closest to our approach as it computes a perturbation of 
the initial polynomial system with a given inverse system, deduced
from an approximation of the singular solution.
The inverse system is used to transform directly the singular root into a simple root of an augmented system.

\subsection{Contributions.} 

In this paper, we present two new constructions. The first one is a
  new deflation method for a system of polynomials with an isolated
  singular root which does not introduce new
parameters. At each step, a single differential of the system is
considered based on the analysis of the Jacobian at the singular
point.   The advantage of this new deflation is that it reduces the number of added equations at each 
deflation iteration from quadratic to linear.  We prove that the resulting deflated system
  has strictly lower multiplicity and depth at the singular point than
  the original one.

 In addition to the results that appeared in \cite{Hauensteinetal2015}, in the present extended version of the paper we 
 study the relationship of the new deflation method to the {\em
   isosingular deflation} (see Proposition \ref{determ}), and show how
 to use our  deflation technique to compute an {\em isosingular
   decomposition}  of an algebraic set,  introduced in
 \cite{HauWam13}  (see Section \ref{isosing}). 

 Secondly, to approximate efficiently both the singular point and its
 multiplicity structure, we propose a new deflation which involves
 fewer number of new variables compared to other approaches that rely
 on Macaulay's dialytic method. It is based on a new characterization
 of the isolated singular point together with its multiplicity
 structure via inverse systems. The deflated polynomial system
 exploits the nilpotent and commutation properties of the
 multiplication matrices in the local algebra of the singular
 point. We prove that the polynomial system we construct has a root
 corresponding to the singular root but now with multiplicity one, and
 the new added coordinates describe the multiplicity structure.

 This new method differs dramatically from previous deflation methods.
All other deflation methods in the literature use an
iterative approach that may apply as many iterations as the maximal
order  of the derivatives of the input polynomials that vanish at the
root. At each iteration these traditional deflation techniques at
least double the number polynomial equations, and either introduce new
variables, or greatly increase the degrees of the new
polynomials. Thus these deflation techniques grow exponentially in the
number of iterations and are considered very inefficient when more
than 2 iterations are needed. Our new technique 
completely deflates the root in a single iteration,  introducing both  new variables and new polynomials to
the system. The number of new variables and polynomials  are
quadratic in the multiplicity of the point, and the degrees also
remain  bounded by the original degrees and the multiplicity.
More precisely, the number of variables and equations in this construction
is at most $n+n\mult(\mult -1)/2$ and
 $N\mult+n(n-1)(\mult-1)(\mult-2)/4$, respectively, where $N$~is the
 number of input polynomials, $n$~is the number of variables, and
 $\mult$~is the multiplicity of the singular point. The degrees of the
 polynomials in the new system are bounded by the degrees of the input
 system plus the {\em order} of the root, i.e. the maximal order of
 the differentials that vanish at the root. 
Thus, it is the first deflation technique that produces a deflated system which
has polynomial size in the multiplicity and in the size of the input.

In this extended version we also give a new construction, called
{\em $E$-deflated ideals}, which is a modification of  {\em deflated ideals}
introduced in \cite{Leykin2008}.  While the construction in
\cite{Leykin2008} uses Macaulay's dialytic method, our construction is
based on our deflation method using multiplication matrices, which
results in introducing significantly fewer auxiliary variables. We prove that
the isolated simple roots of the $E$-deflated ideal correspond to
roots of the original system that have a prescribed multiplicity
structure up to a given order (see Section \ref{DeflIdeal}).

\section{Preliminaries}

Let $\fb:= (f_1, \ldots, f_N)\in \KK[\bx ]^N$ with  $\bx =(x_1, \ldots, x_n)$ for some $\KK\subset \C$ field. Let $\bxi=(\xi_1, \ldots, \xi_n)\in \C^n$ be an isolated multiple  root of $\fb$. 
Let $I=\langle f_1, \ldots, f_N\rangle$, $\m_{\xi}$ be the maximal ideal at ${\xi}$ and $\QQ$ be the primary component of $I$ at $\bxi$ so that $\sqrt{\QQ}=\m_{\xi}$.\\

Consider the ring of power series $\K[[\bpartial_\xi]]:= \K[[\partial_{1,\xi}, \ldots, \partial_{n,\xi}]]$ and we use the notation 
for  $\bbeta=(\beta_1, \ldots, \beta_n)\in \N^n$:
$$\bpartial^{\bbeta}_{\xi}:=\partial_{1, \xi}^{\beta_1}\cdots \partial_{n, \xi}^{\beta_n}.$$ 
We identify $\C[[\bpartial_\xi]]$ with the dual space $\C[\bx ]^*$  by
considering $\bpartial^{\bbeta}_{\xi}$ as derivations and evaluations at $\bxi$,  defined by 
\begin{equation}\label{partial}
\bpartial^{\bbeta}_{ \xi}(p):=\bpartial^\bbeta(p)\bigg|_{\bxi} := \frac{\partial^{|\bbeta|  } p}{\partial x_1^{\beta_1}\cdots \partial x_n^{\beta_n}} (\bxi) \quad \text{ for } p\in \C[\bx].
\end{equation}
Hereafter, the derivations ``at $\xb$'' will be denoted
$\bpartial^{\bbeta}$ instead of $\bpartial^{\bbeta}_{\xb}$. The
derivation with respect to the variable $\partial_{i}$ in $\K[[\bpartial]]$ is denoted
$\pp{\partial_{i}}$ $(i=1,\ldots, n)$.
Note that 
$$
 \frac{1}{\bbeta!} \bpartial^{\bbeta}_{ \xi}((\bx-\bxi)^\balpha)=\begin{cases} 1  & \text{ if } \balpha=\bbeta, \\
0 & \text{ otherwise,}
\end{cases}
$$
where $\bbeta! = \beta_1!\cdots \beta_n!$.

For $p\in \K[\bx]$ and $ \Lambda\in \K[[\bpartial_\xi]]=\K[\bx]^{*}$,
let 
$$
p\cdot \Lambda: q \mapsto \Lambda (p\,q).
$$
We check that $p=(x_i-\xi_i)$ acts as a derivation on
$\C[[\bpartial_\xi]]$:
$$ 
(x_{i}-\xi_{i}) \cdot \bpartial^{\bbeta}_{ \xi}= \pp{\partial_{i, \xi}} (\bpartial^{\bbeta}_{ \xi})
$$

For an ideal $I\subset \K[\bx]$, let $I^{\perp}=\{\Lambda \in
\K[[\bpartial_{\xi}]]\mid \forall p\in I, \Lambda (p)=0\}$.
The vector space $I^{\perp}$ is naturally identified with the dual
space of $\K[\bx]/I$.
We check that $I^{\perp}$ is a vector subspace of $\K[[\bpartial_{\xi}]]$,
which is stable by the derivations ${d_{\partial_{i, \xi}}}$.

\begin{lemma}\label{lem:primcomp}
If $Q$ is a $\m_{\xi}$-primary isolated component of $I$, then $Q^{\perp}=I^{\perp}\cap\K[\bpartial_{\xi}]$.
\end{lemma}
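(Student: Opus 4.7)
The plan is to establish both inclusions separately, using the primary decomposition of $I$ to separate out the non-$\m_{\xi}$-primary components. The direction $Q^{\perp}\subseteq I^{\perp}\cap\K[\bpartial_{\xi}]$ is immediate from two observations: since the primary decomposition yields $I\subseteq Q$, any linear form vanishing on $Q$ vanishes on $I$, giving $Q^{\perp}\subseteq I^{\perp}$; and since $Q$ is $\m_{\xi}$-primary, $\m_{\xi}^{N}\subseteq Q$ for some $N$, so any $\Lambda\in Q^{\perp}$ is annihilated by the $\K[\bx]$-action of $\m_{\xi}^{N}$. Via the derivation identity $(x_{i}-\xi_{i})\cdot\bpartial^{\bbeta}_{\xi}=\pp{\partial_{i,\xi}}\bpartial^{\bbeta}_{\xi}$ recalled above, being killed by $\m_{\xi}^{N}$ forces the support of $\Lambda$ to be concentrated in multi-indices with $|\bbeta|<N$, so $\Lambda\in\K[\bpartial_{\xi}]$.

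For the reverse inclusion, fix $\Lambda\in I^{\perp}\cap\K[\bpartial_{\xi}]$, and let $N$ be large enough that every monomial $\bpartial^{\bbeta}_{\xi}$ in the support of $\Lambda$ has $|\bbeta|<N$; then $\Lambda$ vanishes on every element of $\m_{\xi}^{N}$. Take a minimal primary decomposition $I=Q\cap Q_{2}\cap\cdots\cap Q_{s}$ with associated primes $\m_{\xi},\mathfrak{p}_{2},\ldots,\mathfrak{p}_{s}$. Because $\bxi$ is an isolated zero of $I$, the maximal ideal $\m_{\xi}$ is a minimal prime of $I$, so $\mathfrak{p}_{i}\not\subseteq\m_{\xi}$ for each $i\geq 2$. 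For each such $i$, pick $g_{i}\in\mathfrak{p}_{i}$ with $g_{i}(\bxi)\neq 0$ and a power $k_{i}$ with $g_{i}^{k_{i}}\in Q_{i}$, and set $g=\prod_{i\geq 2}g_{i}^{k_{i}}\in Q_{2}\cap\cdots\cap Q_{s}$. Then $g(\bxi)\neq 0$.

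The key step is now the following computation. For every $q\in Q$, the product $gq$ lies in $Q\cap\bigcap_{i\geq 2}Q_{i}=I$, so $\Lambda(gq)=0$. Since $g(\bxi)\neq 0$, the element $g$ is a unit modulo $\m_{\xi}^{N}$, so we may choose $h\in\K[\bx]$ with $1-gh\in\m_{\xi}^{N}$. Then
$$\Lambda(q)=\Lambda(ghq)+\Lambda((1-gh)q)=0,$$
the first summand vanishing because $g(hq)\in I$ (as $hq\in Q$), and the second because $(1-gh)q\in\m_{\xi}^{N}$ and $\Lambda$ vanishes on $\m_{\xi}^{N}$. Hence $\Lambda\in Q^{\perp}$.

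The one subtle point worth checking carefully is the assertion $\mathfrak{p}_{i}\not\subseteq\m_{\xi}$ for $i\geq 2$: this combines minimality of $\m_{\xi}$ among associated primes (ruling out $\mathfrak{p}_{i}\subsetneq\m_{\xi}$) with uniqueness of the primary component at a minimal prime (ruling out $\mathfrak{p}_{i}=\m_{\xi}$). Once the separator $g$ with $g(\bxi)\neq 0$ is in hand, the rest is a routine lifting argument exploiting that $\Lambda$ has bounded order.
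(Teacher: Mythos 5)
The paper states Lemma~\ref{lem:primcomp} without proof (it is a standard consequence of Macaulay's inverse-system duality), so there is no in-text argument to compare against; your proof is correct and is the classical one. The easy inclusion $Q^{\perp}\subseteq I^{\perp}\cap\K[\bpartial_{\xi}]$ follows, as you note, from $I\subseteq Q$ together with the fact that $\m_{\xi}^{N}\subseteq Q$ forces any $\Lambda\in Q^{\perp}$ to have bounded order, hence to be a polynomial in $\bpartial_{\xi}$. For the reverse inclusion, the separator element $g\in\bigcap_{i\geq 2}Q_{i}$ with $g(\bxi)\neq 0$ (so that $gQ\subseteq I$) is exactly the right device, and exploiting that $g$ is a unit modulo $\m_{\xi}^{N}$ to pass from $\Lambda(gq)=0$ to $\Lambda(q)=0$ for $q\in Q$ is the standard completion. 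The point you single out as subtle — that the other associated primes $\mathfrak{p}_{i}$ satisfy $\mathfrak{p}_{i}\not\subseteq\m_{\xi}$ — is indeed the place to be careful, and your two-pronged argument (minimality of $\m_{\xi}$ over $I$, coming from $\bxi$ being isolated, together with distinctness of the associated primes in a minimal primary decomposition) handles it correctly.
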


This lemma shows that to compute $Q^{\perp}$, it suffices to compute
all polynomials of $\K[\bpartial_{\xi}]$ which are in $I^{\perp}$.
Let us denote this set $\DDD= I^{\perp}\cap\K[\bpartial_{\xi}]$. It is a
vector space stable under the derivations
$\pp{\partial_{i, \xi}}$. Its dimension is the dimension of
$Q^{\perp}$ or $\K[\bx]/Q$, that is the {\em multiplicity} of
$\xi$, denote it by $\mult_{\xi} (I)$, or simply by $\mult$  if $\xi$ and $I$ is clear from the context.

For an element $\Lambda(\bpartial_\xi) \in \K[\bpartial_\xi]$ we
define the {\em order}  ${\rm ord}(\Lambda)$ to be the maximal
$|\bbeta|$ such that $\bpartial^{\bbeta}_{ \xi}$ appears in
$\Lambda(\bpartial_\xi)$ with non-zero coefficient.  

For $t\in \N$, let $\DDD_{t}$ be the elements of $\DDD$ of order $\leq t$.
As $\DDD$ is of dimension $d$, there exists a smallest $t\geq 0$ such that
$\DDD_{t+1}= \DDD_{t}$. Let us call this smallest $t$, the {\em nil-index} of
$\DDD$ and denote it by $\nil_{\xi} (I)$, or simply by $\nil$. As $\DDD$ is stable by the derivations
$\pp{\partial_{i, \xi}}$,
we easily check that for $t\geq \nil_{\xi} (I)$, $\DDD_{t}=\DDD$ and
that $\nil_{\xi} (I)$ is the maximal degree of the elements in $\DDD$.

\section{Deflation using first differentials}\label{Sec:Deflation}

\noindent To improve the numerical approximation of a root, one usually
applies a Newton-type methods to converge quadratically from
a nearby solution to the root of the system, provided it is simple.
In the case of multiple roots, deflation techniques are employed to
transform the system into another one which has an equivalent root
with a smaller multiplicity or even with multiplicity one.

We describe here a construction, using differentials of order one,
which leads to a system with a simple root. This construction improves
the constructions in \cite{lvz06,DaytonLiZeng11} 
since no new variables are added. 
It also improves the constructions presented in 
\cite{HauWam13} and the ``kerneling'' method of \cite{GiuYak13}
by adding a smaller number of equations at each deflation step.
Note that, in \cite{GiuYak13}, there are smart preprocessing and postprocessing
steps which could be utilized in combination with our method.  In the preprocessor, one
adds directly partial derivatives of polynomials which are zero at the root.  
The postprocessor extracts a square subsystem of the completely deflated system 
for which the Jacobian has full rank at the root.

\subsection{Determinantal deflation}
Consider the Jacobian matrix $J_{\fb} (\bx) = \left[\partial_{j} f_{i} (\bx)\right]$ of the
initial system $\fb$.
By reordering properly the rows and columns (i.e., polynomials and variables),
it can be put in the form 
\begin{equation} 
J_{\fb} (\bx) 
:= \left [
\begin{array}{cc}
A (\bx) & B (\bx) \\
C (\bx) & D (\bx)
\end{array}
\right] 
\end{equation}
where $A(\bx)$ is an $r\times r$ matrix with
$r = \rank J_{\fb}(\bxi) = \rank A(\bxi)$.

Suppose that $B(\bx)$ is an $r\times c$ matrix.  The $c$ columns
\begin{eqnarray*}\label{parkernel}
\det (A (\bx)) \left[
\begin{array}{c}
-A^{-1} (\bx) B (\bx) \\
\mathrm{Id}
\end{array}
\right]
\end{eqnarray*}
(for $r= 0$ this is the identity matrix) yield the $c$ elements 
$$ 
\Lambda_{1}^{\bx}=\sum_{i=1}^{n} \lambda_{1,j} (\bx) \partial_{j},~\ldots,~
\Lambda_{c}^{\bx}=\sum_{i=1}^{n}\lambda_{c,j} (\bx) \partial_{j}.
$$
Their coefficients $\lambda_{i,j}(\bx)\in \KK[\bx]$ are polynomial in the
variables $\bx$. 
Evaluated at $\bx=\bxi$, they generate the kernel of
$J_{\fb} (\bxi)$ and form a basis of $\DDD_{1}$. 

\begin{definition}
The family $D^{\bx}_{1}=\{\Lambda_{1}^{\bx}, \ldots,
\Lambda_{c}^{\bx}\}$ is the {\em formal} inverse system of
order $1$ at $\bxi$. 
For $\bm i=\{i_{1},\ldots, i_{k}\}\subset$ $\{1, \ldots, c\}$ 
with $|\bm i|\neq 0$, the $\bm i$-{\em deflated system} of order~$1$~of~$\fb$~is
$$ 
\{\fb, {\Lambda}_{i_{1}}^{\bx} (\fb), \ldots, {\Lambda}_{i_{k}}^{\bx} (\fb)\}.
$$
\end{definition}
The deflated system is obtained by adding some minors of the Jacobian
matrix $J_{\fb}$ as shown by the following lemma. Note that this establishes the close relationship of our method 
to the {\em isosingular deflation} involved in \cite{HauWam13}.
\begin{proposition}\label{determ}
 For $i=1, \ldots, c$, 
\begin{equation}\label{eq:minor}
\Lambda^{\bx}_{i}(f_{j})= 
\left |
  \begin{array}{cccc}
\partial_{1} f_{1}& \cdots&   \partial_{r} f_{1}  & 
            \partial_{r+i} f_{1}\\ 
            \vdots & &\vdots & \vdots\\ 
\partial_{1} f_{r}& \cdots&   \partial_{r} f_{r}  &  
            \partial_{r+i} f_{r}\\ 
  \partial_{1} f_{j}& \cdots&   \partial_{r} f_{j} & \partial_{r+i} f_{j}
\end{array}
\right|.
\end{equation}
\end{proposition}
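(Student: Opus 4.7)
The plan is to expand both sides explicitly and match the coefficients of $\partial_{j} f_{k}$ (reading $j$ in place of the index the statement calls $j$, and $k$ for the chosen row) using Cramer's rule / the adjugate formula.

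First, I would make the coefficients $\lambda_{i,j}(\bx)$ of $\Lambda^{\bx}_{i}$ completely explicit. By construction, the $i$-th column of
$$
\det(A(\bx)) \begin{pmatrix} -A^{-1}(\bx) B(\bx) \\ \mathrm{Id} \end{pmatrix}
$$
equals $\bigl(-\mathrm{adj}(A(\bx))\, B(\bx)\, e_{i}\,,\; \det(A(\bx))\, e_{i}\bigr)^{T}$, using $\det(A)\, A^{-1} = \mathrm{adj}(A)$. Since $B(\bx)\, e_{i}$ is the column $(\partial_{r+i} f_{1},\dots,\partial_{r+i} f_{r})^{T}$, Cramer's rule identifies the $j$-th entry of $\mathrm{adj}(A)\, B e_{i}$ (for $1\le j\le r$) with $\det A^{(j\to Be_{i})}$, the determinant of $A$ with its $j$-th column replaced by $B e_{i}$. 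Consequently
$$
\lambda_{i,j} = -\det A^{(j\to Be_{i})} \;(1\le j\le r), \quad \lambda_{i,r+i}=\det A, \quad \lambda_{i,r+k}=0 \;(k\ne i),
$$
and therefore
$$
\Lambda^{\bx}_{i}(f_{j}) = -\sum_{\ell=1}^{r} \det A^{(\ell\to Be_{i})}\, \partial_{\ell} f_{j} \;+\; \det(A)\, \partial_{r+i} f_{j}.
$$

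Next, I would expand the $(r+1)\times(r+1)$ determinant on the right-hand side of \eqref{eq:minor} along its last row (the $f_{j}$ row). The cofactor of $\partial_{r+i} f_{j}$ is exactly $\det A$, which matches the last summand above. For each $\ell\in\{1,\dots,r\}$, the cofactor of $\partial_{\ell} f_{j}$ is $(-1)^{r+1+\ell}$ times the $r\times r$ minor obtained by deleting column $\ell$ from the block $[\,A\,|\,Be_{i}\,]$. To compare this minor with $\det A^{(\ell\to Be_{i})}$, I would move the last column (which is $Be_{i}$) back to position $\ell$ through $r-\ell$ adjacent transpositions, picking up a sign $(-1)^{r-\ell}$. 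The total sign becomes $(-1)^{r+1+\ell}(-1)^{r-\ell}=(-1)^{2r+1}=-1$, which exactly reproduces the coefficient $-\det A^{(\ell\to Be_{i})}$ appearing in $\Lambda^{\bx}_{i}(f_{j})$.

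Matching the two expansions termwise yields the claimed equality. There is no real obstacle here beyond careful bookkeeping of signs coming from the column permutation; the identity is a direct consequence of Cramer's rule applied to the kernel construction of $\Lambda^{\bx}_{i}$, and it holds as a polynomial identity in $\bx$ (so no nondegeneracy hypothesis on $A(\bx)$ is needed, only that $A$ is square of size $r$). A slightly slicker but equivalent presentation would observe that the $(r+1)\times(r+1)$ determinant in \eqref{eq:minor} is obtained by applying the cofactor expansion directly to the column vector $\det(A)\bigl(-A^{-1}Be_{i},\, e_{i}\bigr)^{T}$ paired against the row $(\partial_{1} f_{j},\dots,\partial_{r} f_{j},\partial_{r+i} f_{j})$, making the identity a Laplace-expansion tautology.
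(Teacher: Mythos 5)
Your proposal is correct and follows essentially the same path as the paper's proof: both identify $\lambda_{i,k}$ via Cramer's rule with (signed) $r\times r$ minors of $[A\mid B_i]$ and then recognize $\Lambda_i^{\bx}(f_j)$ as the cofactor expansion of the $(r+1)\times(r+1)$ determinant along its last row. You are simply more explicit about the sign bookkeeping ($(-1)^{r+1+\ell}(-1)^{r-\ell}=-1$) that the paper compresses into the phrase ``up to $(-1)^{r+k+1}$,'' and you correctly note the identity needs no nondegeneracy assumption on $A(\bx)$ since it is a polynomial identity.
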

\begin{proof}
We have
$
\Lambda_{i}^{\bx}(f_{j})
= \sum_{k=1}^{r} \lambda_{i,k}  \partial_{k}(f_{j}) + \det(A)\, \partial_{r+i} (f_{j}) 
$
where ${\blambda}=[\lambda_{i,1},\ldots, \lambda_{i,r}]$ $= - \det(A)\, 
A^{-1} B_{i}$ is the solution of the system 
$$ 
A \, {\blambda} + \det(A) B_{i} = 0,
$$
and $B_{i}$ is the $i^{th}$ column of $B$. By Cramer's rule,
$\lambda_{i,k}$ is up to $(-1)^{r+k+1}$ the $r\times r$ minor of the matrix $[A \, | \,
B_{i}]$ where the $k^{th}$ column is removed.
Consequently $\Lambda_{i}^{\bx}(f_{j}) =\sum_{k=1}^{r} \lambda_{i,k} (\bx) \partial_{k}(f_{j}) +
\det(A)\, \partial_{r+i} (f_{j})$ corresponds to the expansion of the
determinant \eqref{eq:minor} along the last row.
\end{proof}
This proposition implies that ${\Lambda}_{i}^{\bx} (\fb)$
has at most $n-c$ zero entries ($j\not \in [1,\ldots, r]$). Thus,
the number of non-trivial new equations added in the
$\bm i$-deflated system
is \mbox{$|\bm i|\cdot(N-n+c)$}.
The construction depends on the choice of the invertible block $A(\bxi)$ in
$J_{\fb} (\bxi)$. 
By a linear invertible transformation of the initial system and by
computing a $\bm i$-deflated system, one obtains 
a deflated system constructed from any $|\bm i|$ linearly 
independent elements of the~kernel~of~$J_{\fb} (\bxi)$.

\begin{example}\label{Ex:Illustrative} Consider the multiplicity $2$ root $\bxi = (0,0)$ 
for the system $f_1(\bx) = x_1 + x_2^2$ and $f_2(\bx) = x_1^2 + x_2^2$.
Then,
{\small $$
J_{\fb}(\bx) =
\left[
\begin{array}{cc}
A (\bx) & B (\bx) \\
C (\bx) & D (\bx)
\end{array}
\right] = \left[\begin{array}{cc} 1 & 2x_2 \\ 2x_1 & 2x_2 \end{array}\right].
$$}
As $A(\bxi)$ is of rank $1$, the $\{1\}$-deflated system of 
order $1$ of $\fb$ obtained by adding the $2\times 2$ bording minor of
$A$, that is the
determinant of the $J_{\fb}$, is
$$
\left\{x_1 + x_2^2, ~~x_1^2 + x_2^2, ~-4x_1x_2 + 2x_2\right\},
$$
which has a multiplicity $1$ root at $\bxi$.
\end{example}
 
We use the following to analyze this deflation procedure.

\begin{lemma}[Leibniz rule]
For $a,b\in \KK[\bm x]$, 
$$ 
\bpartial^{\alpha} (a\,b) =\sum_{\beta\in \N^{n}} \frac{1}{\beta!} \bpartial^{\beta} (a) \pp{\partial}^{\bm\beta}
(\bpartial^{\alpha}) (b).
$$
\end{lemma}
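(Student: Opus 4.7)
The plan is to reduce the stated identity to the classical multivariate Leibniz rule for partial derivatives of a product, by first computing the action of the ``differential'' derivations $\pp{\partial_i}$ on the monomial $\bpartial^{\alpha}\in\K[\bpartial]$. Since $\pp{\partial_i}(\partial_{i}^{k})=k\,\partial_{i}^{k-1}$ and $\pp{\partial_i}$ commutes with $\partial_{j}$ for $j\neq i$, iterating these operators componentwise yields
$$
\pp{\partial}^{\bm\beta}(\bpartial^{\alpha})=\frac{\alpha!}{(\alpha-\beta)!}\,\bpartial^{\alpha-\beta}
$$
whenever $\beta\leq\alpha$ componentwise, and $0$ otherwise. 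In particular, every term with $\beta\not\leq\alpha$ in the right-hand side of the stated formula vanishes, so the infinite sum over $\N^{n}$ collapses to the finite sum over $\beta\leq\alpha$.

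Substituting this computation into the right-hand side and recalling that $\bpartial^{\alpha-\beta}$, viewed as a functional on $\K[\bx]$, is evaluation of the corresponding partial derivative at $\bxi$, we obtain
$$
\sum_{\beta\in\N^{n}}\frac{1}{\beta!}\,\bpartial^{\beta}(a)\,\pp{\partial}^{\bm\beta}(\bpartial^{\alpha})(b)
=\sum_{\beta\leq\alpha}\binom{\alpha}{\beta}\,\bpartial^{\beta}(a)\,\bpartial^{\alpha-\beta}(b),
$$
where $\binom{\alpha}{\beta}=\tfrac{\alpha!}{\beta!(\alpha-\beta)!}$ is the multi-index binomial coefficient.

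The remaining step is to identify this finite sum with $\bpartial^{\alpha}(ab)$, which is the standard multivariate Leibniz rule for classical partial derivatives evaluated at the point $\bxi$. This rule follows immediately by induction on $|\alpha|$ from the one-variable product rule $\partial_{i}(ab)=\partial_{i}(a)\,b+a\,\partial_{i}(b)$, applied one variable at a time. Combining the two displays closes the proof.

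I do not expect any step to present a real obstacle; the only care needed is bookkeeping of multi-index factorials and verifying that the terms indexed by $\beta\not\leq\alpha$ genuinely contribute zero, so that extending the summation range from $\beta\leq\alpha$ to all of $\N^{n}$ (as written in the statement) is legitimate.
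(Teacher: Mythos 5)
Your proof is correct. The paper states this lemma without proof (it is the standard multivariate Leibniz rule rewritten so that the binomial coefficients are absorbed into the action of $\pp{\partial}^{\bm\beta}$ on the monomial $\bpartial^{\alpha}$), so there is nothing to compare against; your reduction via $\pp{\partial}^{\bm\beta}(\bpartial^{\alpha})=\tfrac{\alpha!}{(\alpha-\beta)!}\,\bpartial^{\alpha-\beta}$ for $\beta\le\alpha$ (and $0$ otherwise), followed by an appeal to the classical product rule by induction on $|\alpha|$, is exactly the intended verification and fills the gap cleanly.
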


\begin{proposition}\label{deflation:1} Let $r$ be the rank of
  $J_{\fb}(\bxi)$. Assume that $r<n$. Let $\bm i\subset
  \{1,\ldots,n\}$ with $0<|\bm i|\leq n-r$ 
  and $\fb^{(1)}$~be the $\bm i$-{\em deflated system} of order $1$ of
$\fb$. Then, $\mult_{\bxi}
(\fb^{(1)})\geq 1$ and $\nil_{\bxi} (\fb^{(1)}) < \nil_{\bxi} (\fb)$, which also implies that $\mult_{\bxi}
(\fb^{(1)})< \mult_{\bxi}
(\fb)$.
\end{proposition}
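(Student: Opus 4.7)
The plan has two parts. For the easy bound $\mult_\bxi(\fb^{(1)})\ge 1$, each new generator $\Lambda_j^\bx(f_k)$ evaluated at $\bxi$ equals $\Lambda_j^\bxi(f_k)=\sum_l\lambda_{j,l}(\bxi)(\partial_lf_k)(\bxi)$, which is the $k$-th entry of $J_\fb(\bxi)\cdot(\lambda_{j,l}(\bxi))_l$ and vanishes because the vector $(\lambda_{j,l}(\bxi))_l$ lies in $\ker J_\fb(\bxi)$ by construction. The real content is the strict drop of the nil-index, from which the strict multiplicity drop will follow.

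The central tool will be the identity
\begin{equation*}
M\bigl(\Lambda_j^\bx(g)\bigr)=N_j(M)(g)\qquad\text{for all }g\in\KK[\bx],\ M\in\K[\bpartial_\xi],
\end{equation*}
where $N_j(M):=\sum_{\beta\in\N^n}\tfrac{1}{\beta!}\,\Lambda_j^{[\beta]}\,\pp{\partial}^\beta M\in\K[\bpartial_\xi]$ is built from $\Lambda_j^{[\beta]}:=\sum_l(\bpartial^\beta\lambda_{j,l})(\bxi)\,\partial_l$, and juxtaposition denotes the usual polynomial product in $\K[\bpartial_\xi]$. I would derive this by applying the paper's Leibniz rule to $\lambda_{j,l}\cdot\partial_l g$, summing over $l$, and using the elementary fact that $(\partial_l\,N)(g)=N(\partial_l g)$ for $N\in\K[\bpartial_\xi]$. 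The decisive feature of $N_j(M)$ is an order estimate: the $\beta=0$ term $\Lambda_j^\bxi\,M$ has order $\ord(M)+1$, while every $\beta\ne 0$ term has order at most $\ord(M)$, because $\pp{\partial}^\beta M$ has order $\le\ord(M)-|\beta|$. Consequently the top homogeneous part of $N_j(M)$ of degree $\ord(M)+1$ is exactly $\Lambda_j^\bxi\,M_t$, where $M_t$ denotes the top homogeneous part of $M$.

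Fix $j\in\bm i$ and $M\in\DDD^{(1)}=(I^{(1)})^\perp\cap\K[\bpartial_\xi]$. The crucial step, and the one I expect to be the main obstacle, is to upgrade the immediate consequence $N_j(M)(f_k)=0$ (for all $k$) to the stronger statement $N_j(M)\in\DDD$, i.e.\ that $N_j(M)$ annihilates every multiple $p\,f_k$ with $p\in\KK[\bx]$. Here I would invoke the derivation-type identity
\begin{equation*}
\Lambda_j^\bx(p\,f_k)=\Lambda_j^\bx(p)\,f_k+p\,\Lambda_j^\bx(f_k)\in I^{(1)},
\end{equation*}
whose right-hand side lies in $I^{(1)}$ because the first term is a multiple of $f_k\in I\subset I^{(1)}$ and the second a multiple of $\Lambda_j^\bx(f_k)\in I^{(1)}$. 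Since $M\in(I^{(1)})^\perp$, this forces $M(\Lambda_j^\bx(p\,f_k))=0$, and the central identity then yields $N_j(M)(p\,f_k)=0$ for all $p,k$, so $N_j(M)\in I^\perp\cap\K[\bpartial_\xi]=\DDD$.

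To conclude, suppose for contradiction that $\ord(M)=\nil_\bxi(\fb)=:t$. Since $\K[\bpartial_\xi]$ is an integral domain and $\Lambda_j^\bxi\ne 0$, the product $\Lambda_j^\bxi\,M_t$ is nonzero, so $N_j(M)\in\DDD$ has order exactly $t+1$, contradicting the maximality of $t$ as the order of elements of $\DDD$. Hence every $M\in\DDD^{(1)}$ has $\ord(M)<t$, giving $\nil_\bxi(\fb^{(1)})<\nil_\bxi(\fb)$; and since $\DDD^{(1)}\subset\DDD_{t-1}\subsetneq\DDD_t=\DDD$ by the very definition of $\nil$, we obtain the strict inequality $\mult_\bxi(\fb^{(1)})<\mult_\bxi(\fb)$ as well.
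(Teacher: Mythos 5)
Your proof is correct, and it is built around the same operator and the same order estimate as the paper's: your $N_j(M)=\sum_\beta\tfrac{1}{\beta!}\Lambda_j^{[\beta]}\,\pp{\partial}^\beta M$ is precisely the paper's $\phi_j(M)=\sum_\beta\Delta_{j,\beta}\,\pp{\partial}^\beta M$ (since $\Delta_{j,\beta}=\tfrac{1}{\beta!}\Lambda_j^{[\beta]}$), and both arguments conclude by noting that $\phi_j$ raises order by exactly one while sending $\DDD^{(1)}$ into $\DDD$.

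Where you genuinely diverge is in how you prove $\phi_j(\DDD^{(1)})\subset\DDD$. The paper works on the dual side: it shows via $\pp{\partial_{i,\xi}}(\phi_j(\Lambda))=\sum_\beta\lambda_{j,i,\beta}\pp{\partial}^\beta\Lambda+\phi_j(\pp{\partial_{i,\xi}}\Lambda)$ that $\DDD+\phi_j(\DDD^{(1)})$ is stable under derivation and annihilates $\fb$, and then (implicitly invoking that such a subspace of $\K[\bpartial_\xi]$ must lie in $I^\perp\cap\K[\bpartial_\xi]=\DDD$) concludes the containment. You instead work on the primal side, using that $\Lambda_j^{\bx}$ is a derivation of $\KK[\bx]$ to get the ideal-membership fact $\Lambda_j^{\bx}\bigl(I\bigr)\subset I^{(1)}$; combined with your central identity $M(\Lambda_j^{\bx}(g))=N_j(M)(g)$ this gives $N_j(M)(pf_k)=0$ for all $p$ directly, and Lemma~\ref{lem:primcomp} finishes. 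Both routes are sound and of comparable length; yours makes explicit the fact the paper's stability argument is really encoding, and it also spells out (via integrality of $\K[\bpartial_\xi]$) why the degree-$(\ord M+1)$ term $\Lambda_j^{\bxi}M_t$ cannot cancel, a point the paper asserts without comment.
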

\begin{proof}
By construction, for $i\in\bm i$, 
the polynomials ${\Lambda}_{i}^{\bx} (\fb)$
vanish at $\bxi$, so that $\mult_{\bxi} (\fb^{(1)})\ge 1$. 
By hypothesis, the Jacobian of $\fb$ is not 
injective yielding $\nil_{\bxi}(\fb)> 0$.
Let $\DDD^{(1)}$ be the inverse
system of $\fb^{(1)}$ at $\bxi$. 
Since $(\fb^{(1)})\supset (\fb)$, 
we have $\DDD^{(1)}\subset \DDD$.
In particular, for any non-zero element $\Lambda \in \DDD^{(1)}\subset
\KK[\bm\partial_{\bxi}]$ and $i\in \bm i$, 
$\Lambda (\fb)=0$ and $\Lambda ({\Lambda}^{\bx}_{i}
(\fb))=0$.  

Using Leibniz rule, for any $p\in \KK[\bx]$, we have 
{
\begin{eqnarray*}
\Lambda ({\Lambda}_{i}^{\bx} (p)) &=&
\Lambda \left(\sum_{j=1}^{n} {\lambda}_{i,j} (\bx)\partial_{j} (p)\right)\\
&=&
\sum_{\beta\in \N^{n}} \sum_{j=1}^{n} \frac{1}{\beta!} \bpartial_{\bxi}^{\bm\beta} ( {\lambda}_{i,j} (\bx)) 
\pp{\partial_{\bxi}}^{\bm\beta}
(\Lambda) \partial_{j,\bxi} (p)\\ 
&=&
\sum_{\beta\in \N^{n}} \sum_{j=1}^{n} \frac{1}{\beta!} \bpartial_{\bxi}^{\bm\beta} ( {\lambda}_{i,j} (\bx)) \bpartial_{j,\xi} \pp{\partial_{\xi}}^{\bm\beta}
(\Lambda) (p)\\
&=&
\sum_{\beta\in \N^{n}} \Delta_{i,\beta} \pp{\partial_{\xi}}^{\bm\beta} (\Lambda) (p)\\
\end{eqnarray*}
} where 
{
$$ 
\Delta_{i,\bm\beta}=\sum_{j=1}^{n}
{\lambda}_{i,j,\bm\beta} \partial_{j,\xi}\in\KK[\bm\partial_{\bxi}]
\hbox{~and~}
{\lambda}_{i,j,\bm\beta} = \frac{1}{\bm\beta!}
\partial_{\bxi}^{\bm\beta}({\lambda}_{i,j}(\bx))\in \KK.$$
}

The term $\Delta_{i,\bm 0}$ is
$\sum_{j=1}^{n}
\lambda_{i,j} (\bxi) \partial_{j,\xi}$ which has 
degree $1$ in~$\bpartial_{\bxi}$ 
since $[\lambda_{i,j} (\bxi)]$ is a non-zero element of
$\ker J_{\fb} (\bxi)$.
For simplicity, let $\phi_{i}(\Lambda):= \sum_{\bm\beta\in \N^{n}} \Delta_{i,\bm\beta}
\pp{\partial}^{\bm\beta} (\Lambda)$.

For any $\Lambda\in \C[\bpartial_{\bxi}]$, we have
{
\begin{eqnarray*}
\pp{\partial_{j,\xi}} (\phi_{i}(\Lambda)) 
&=&\sum_{\beta\in \N^{n}} \lambda_{i,j,\beta}   \pp{\partial}^{\bm\beta}(\Lambda)+
\Delta_{i,\beta}  \pp{\partial}^{\bm\beta} (\pp{\partial_{j,\xi}}
(\Lambda))\\
&=&\sum_{\beta\in \N^{n}} \lambda_{i,j,\beta}   \pp{\partial}^{\bm\beta}(\Lambda)+
\phi_{i} (\pp{\partial_{j,\xi}} (\Lambda)).
\end{eqnarray*} 
} Moreover, if $\Lambda \in \DDD^{(1)}$, then by definition
$\phi_{i}(\Lambda) (\fb)=0$. 
Since $\DDD$ and $\DDD^{(1)}$ are both stable by derivation,
it follows that $\forall \Lambda \in \DDD^{(1)}$,
$\pp{\partial_{j,\xi}} (\phi_{i} (\Lambda))\in \DDD^{(1)}+ \phi_{i}(\DDD^{(1)})$. 
Since \mbox{$\DDD^{(1)}\subset \DDD$}, we know $\DDD+\phi_{i} (\DDD^{(1)})$ is stable by
derivation.  For any element $\Lambda$ of $\DDD+\phi_{i} (\DDD^{(1)})$,
$\Lambda (\fb)=0$.  We deduce that $\DDD+\phi_{i} (\DDD^{(1)})=\DDD$.
Consequently, the order of the elements in $\phi_{i} (\DDD^{(1)})$ 
is at most $\nil_{\bxi} (\fb)$.
The statement follows since $\phi_i$ increases the order by $1$,
therefore $\nil_{\bxi}(\fb^{(1)})< \nil_{\bxi}(\fb)$.
\end{proof}

We consider now a sequence of deflations of the system~$\fb$.
Let $\fb^{(1)}$ be the ${\bm i}_{1}$-deflated system of $\fb$. We
construct inductively  
$\fb^{(k+1)}$ as the ${\bm i}_{k+1}$-deflated system of $\fb^{(k)}$ for some
choices of ${\bm i}_{j}\subset \{1,\ldots,n\}$. 
 
\begin{proposition}
There exists $k\leq \nil_{\bxi} (\fb)$ such that $\bxi$ is a
simple root of $\fb^{(k)}$.
\end{proposition}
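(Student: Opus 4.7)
The plan is to iterate Proposition \ref{deflation:1} and track the nil-index, which strictly decreases at each step until $\bxi$ becomes a simple root. The crux is the equivalence
\[
\nil_{\bxi}(\gb)=0 \;\iff\; \bxi \text{ is a simple root of } \gb,
\]
which I would verify first. Indeed, $\DDD$ always contains the evaluation functional $1\in\KK[\bpartial_\xi]$, so $\nil_{\bxi}(\gb)=0$ means $\DDD=\DDD_{0}=\sp\{1\}$, whence $\mult_{\bxi}(\gb)=\dim\DDD=1$; the converse is obvious.

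I would then proceed by (finite) induction on $\nil_{\bxi}(\fb)$. The base case $\nil_{\bxi}(\fb)=0$ is immediate: $\bxi$ is already a simple root of $\fb$, and we take $k=0$. For the inductive step, assume $\nil_{\bxi}(\fb)\geq 1$. Then $\DDD$ contains a nonzero element of order $\geq 1$; equivalently, the rank $r$ of $J_{\fb}(\bxi)$ satisfies $r<n$, since every nonzero class in $\DDD_1/\DDD_0$ corresponds to a nonzero element of $\ker J_{\fb}(\bxi)$. Thus the hypothesis of Proposition \ref{deflation:1} is fulfilled, and for any nonempty choice ${\bm i}_1\subseteq\{1,\ldots,n-r\}$ the deflated system $\fb^{(1)}$ satisfies $\nil_{\bxi}(\fb^{(1)})<\nil_{\bxi}(\fb)$.

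Applying the inductive hypothesis to $\fb^{(1)}$ (whose nil-index is strictly smaller), there exist choices ${\bm i}_2,\ldots,{\bm i}_{k'+1}$ and an integer $k'\leq \nil_{\bxi}(\fb^{(1)})$ such that $\bxi$ is a simple root of $(\fb^{(1)})^{(k')}=\fb^{(k'+1)}$. Setting $k:=k'+1$ gives
\[
k \;\leq\; \nil_{\bxi}(\fb^{(1)})+1 \;\leq\; \nil_{\bxi}(\fb),
\]
as required.

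The argument is essentially mechanical once Proposition \ref{deflation:1} is established; there is no substantial obstacle. The only point that warrants explicit verification is the implication ``$\nil_{\bxi}(\fb)>0 \Rightarrow \ker J_{\fb}(\bxi)\neq 0$'', which guarantees that we can keep applying the deflation step, and the equivalence between simplicity of the root and vanishing of the nil-index used to identify the termination condition.
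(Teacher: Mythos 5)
Your proof is correct and follows essentially the same approach as the paper: iterate Proposition~\ref{deflation:1}, observe that the nil-index strictly decreases at each step, and terminate once it reaches zero. The paper states this more tersely as a descent argument; you've phrased it as an induction and, usefully, made explicit two points the paper leaves implicit --- that $\nil_{\bxi}(\gb)=0$ is equivalent to $\bxi$ being simple (via $\DDD=\DDD_0=\sp\{1\}$), and that $\nil_{\bxi}(\fb^{(k)})>0$ guarantees $\rank J_{\fb^{(k)}}(\bxi)<n$, so the hypothesis of Proposition~\ref{deflation:1} remains available at every step.
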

\begin{proof}
By Proposition \ref{deflation:1}, $\mult_{\bxi} (\fb^{(k)}) \geq 1$ and $\nil_{\bxi} (\fb^{(k)})$ is
strictly decreasing with $k$ until it reaches the value $0$.
Therefore, there exists $k\leq \nil_{\bxi}(I)$ such that $\nil_{\bm\xi}
(\fb^{(k)})=0$ and $\mult_{\bxi} (\fb^{(k)})\geq~1$.
This implies that $\bxi$ is a simple root of $\fb^{(k)}$.
\end{proof}
 
To minimize the number of equations added at each deflation step, we
take $|\bm i|=1$. Then, the number of non-trivial 
new equations added at each step is at most $N-n+c$.

Here, we described an approach using first order differentials 
arising from the Jacobian, but this 
can be easily extended to use higher order differentials.

\subsection{Isosingular decomposition}\label{isosing}

As presented above, the $\bm i$-deflated system
can be constructed even when $\bxi$ is not isolated.
For example, let $\fb^{(1)}$ be the resulting system
if one takes $\bm i =  \{1,\dots,c\}$.
Then, $\fb^{(1)}(\bx) = 0$ if and only if 
$f(\bx) = 0$ and either $\rank\, J_\fb(\bx) \leq r$
or $\det A(\bx) = 0$.  If $\det A(\bx) \neq 0$,
then this produces a {\em strong deflation} in the sense of \cite{HauWam13}
and thus the results of~\cite{HauWam13} involving 
isosingular deflation apply directly to this new~deflation~approach.

One result of \cite{HauWam13} is a stratification
of the solution set of $\fb = 0$, 
called the {\em isosingular decomposition}.
This decomposition produces a finite collection
of irreducible sets $V_1,\dots,V_k$
consisting of solutions of $\fb = 0$,
called {\em isosingular sets} of~$\fb$, 
i.e. Zariski closures of sets of points with the same determinantal deflation sequence
(see \cite[Definition 5.1]{HauWam13} for the precise definition of isosingular sets).
Rather than use the isosingular deflation 
of \cite{HauWam13} which deflates using 
all minors of $J_\fb(x)$ of size $(r+1)\times(r+1)$
where $r = \rank\, J_\fb(\bxi)$, 
one can utilize the approach above 
with $\bm i = \{1,\dots,c\}$.
If $\det A(\bx)\neq 0$ on the solution set, then one obtains directly
the components of the isosingular decomposition.
Otherwise, one simply needs to further investigate
the components which arise with $\det A(\bx) = 0$.  

We describe this computation in detail using two examples.
In the first example, \mbox{$\det A(\bx) = 1$} so that the method
applies directly to computing an isosingular decomposition.  
In the second, we show how to handle the case-by-case analysis
when $\det A(\bx)$ could be zero.

\begin{example}
Consider the polynomial system $\fb(x,y,z)$ where
$$f_1 = x-y^2,~~ f_2 = x+y^2z,~~ f_3 = x^2 - y^3 - xyz.$$
By \cite[Thm.~5.10]{HauWam13},
every isosingular set of $\fb$ is either an irreducible
component of the solution set $\fb = 0$ or is an irreducible
component of the singular set of an isosingular set.  
We start by computing the irreducible components of $\fb = 0$,
namely $V_1 = \{x = y = 0\}$.

Since the curve $V_1$ has multiplicity $2$ with respect to $\fb$,
we need to deflate.  Since the Jacobian
$$J_\fb = \left[\begin{array}{ccc}
 1 &          -2y &    0 \\
1 &         2yz &  y^2 \\
2x - yz& -3y^2 - xz & -xy
\end{array}\right]$$
has rank $1$ on $V_1$, isosingular deflation
would add in all $9$ of the $2\times 2$ minors of $J_\fb$.
This would guarantee that all solutions of the resulting
deflated system would have $\rank J_\fb = 1$
since $J_\fb$ can never be the zero matrix.
However, by using the approach above, we only add 
$4$ polynomials:
$$\fb^{(1)} = \{\fb,  
~~2y + 2yz,~~ 2y(2x - yz) - xz - 3y^2,~~y^2,~-xy\}.$$
Moreover, since $A = 1$, which is the upper left corner of $J_\fb$, 
we obtain the same condition 
as above with the deflation $\fb^{(1)}$, i.e., $\fb^{(1)} = 0$ if and 
only if $\fb = 0$ and $\rank J_\fb = 1$.  
Moreover, one can easily verify that 
$V_1$ has multiplicity $1$ with respect to $\fb^{(1)}$,
i.e., $J_{\fb^{(1)}}$ generically has rank $2$ on $V_1$.

The next step is to compute all points on $V_1$
where $J_{\fb^{(1)}}$ has rank at most $1$.  Since
$$J_{\fb^{(1)}}(0,0,z) = \left[\begin{array}{ccc}
1 & 0 & 0 \\
1 & 0 & 0 \\
0 & 0 & 0 \\
0 & 2z + 2 & 0 \\
-z & 0 & 0 \\
0 & 0 & 0 \\
0 & 0 & 0
\end{array}\right],$$
one observes that the point $(0,0,-1)$ is isosingular with
respect to $\fb$.  Therefore, the irreducible sets 
$V_1$ and $V_2 = \{(0,0,-1)\}$ form the isosingular decomposition of $\fb$.

Since $\bxi = (0,0,-1)$ is an isosingular point, deflation
will produce a system for which this point is nonsingular.  
To that end, since $\rank J_{\fb^{(1)}}(\bxi) =1$, i.e., $c = 2$,
we can use the same null space used in the construction of $\fb^{(1)}$.
In particular, the next deflation adds at most $8$ polynomials.  In
this case, two of them are identically zero so that $\fb^{(2)}$
consists of $13$ nonzero polynomials, $11$ of which are distinct,
with~$\bxi$ being a nonsingular root.  If one instead
used isosingular deflation with all minors, the resulting 
deflated system would consist of $139$ distinct polynomials.  
\end{example}

\begin{example}
Consider the polynomial system $\fb(w,x,y,z)$ where
$$f_1 = w^2 - y^2 - x^3 - yz, ~~ f_2 = z^2.$$
The solution set of $\fb = 0$ is the irreducible cubic
surface 
$$V_1 = \{(w,x,y,0)~|~y^2 = w^2 - x^3\}.$$
Since $V_1$ has multiplicity $2$ with respect to $\fb$,
we deflate by using $A = 2w$ to yield $\fb^{(1)} = \{\fb,~4wz\}$.

Next, we consider the set of points on $V_1$ where
$\rank J\fb^{(1)}\leq 1$.  Since
$$J_{\fb^{(1)}}(w,x,y,0) = \left[\begin{array}{cccc}
2w & -3x^2 & -2y & -y \\
0 & 0 & 0 & 0 \\
0 & 0 & 0 & 4w
\end{array}\right],$$
$\rank J_{\fb^{(1)}}\leq 1$ on the curve
$C = V_1\cap\{w = 0\} = \{(0,x,y,0)~|~y^2 = x^3\}$.
However, since $A = 2w$ is identically zero on this 
curve, we are not guaranteed that this curve is an isosingular
set of $\fb$.  One simply checks if it is an isosingular
set by deflating the original system $\fb$ on this curve.  
If one obtains the curve $C$, then it is an isosingular set and one 
proceeds as above.  Otherwise, the generic points of $C$ 
are smooth points with respect to $\fb$ on a larger isosingular set,
in which case one uses the new deflation to compute new candidates
for isosingular sets.  

To deflate $C$ using $\fb$, we take $A = -y$, the top right corner of $J_\fb$, to yield
$$\gb^{(1)} = \{\fb,
~-4wz,~6x^2z,~2z(2y + z)\}.$$
Since $C\subset V_1$ and $J\gb^{(1)}$ generically has
rank $2$ on $C$ and $V_1$, we know that $C$ is
not an isosingular set with respect to $\fb$.  However,
this does yield information about the isosingular components of $\fb$,
namely there are no curves and each isosingular point must be contained in $C$.
Hence, restricting to $C$, one sees that $\rank J_{\gb^{(1)}}(\bxi)\leq 1$ 
if and only if $\bxi = (0,0,0,0)$.  
Since $\gb^{(1)}$ was constructed using $A = -y$ 
which vanishes at this point, we again need to verify
that the origin is indeed an isosingular point, i.e., deflation
produces a system for which the origin is a nonsingular root.
To that end, since $J_{\fb}(\bxi) = 0$, the first deflation
simply adds  all partial derivatives.  The Jacobian
of the resulting system has rank $3$ for which
one more deflation regularizes~$\bxi$.
Therefore, $V_1$ and $V_2 = \{(0,0,0,0)\}$
form the isosingular decomposition of~$\fb$.
\end{example}

\section{The multiplicity structure}\label{Sec:PointMult}
Before describing our results, we start this section by recalling the  definition of orthogonal primal-dual  pairs of bases for the space $\K[\bx ]/Q$ and its dual. The following is a definition/lemma: 

\begin{lemma}[Orthogonal primal-dual basis pair]\label{pdlemma}
Let $\fb$, $\bxi$,  $Q$, $\DDD$, $\mult= \mult_\xi(\fb)$ and $\nil=\nil_\xi(\fb)$ be as in the Preliminaries. 
Then there exists a primal-dual basis pair of the local ring  $\K[\bx ]/ \QQ$ with the following properties:
\begin{enumerate}
\item The {\em primal basis} of the local ring  $\K[\bx ]/ \QQ$ has the form 
\begin{equation}\label{pbasis}
B:=\left\{( \bx-\xi)^{\balpha_0},  ( \bx-\xi)^{\balpha_1},\ldots, ( \bx-\xi)^{\balpha_{\mult-1}}\right\}.
\end{equation}  
We can assume that  $\alpha_0=0$ and that the monomials in $B$ are  {\em connected to 1}
(c.f. \cite{Mourrain99-nf}). Define the set of exponents in $B$ 
\begin{eqnarray}\label{E}
E:=\{\alpha_0, \ldots, \alpha_{\mult-1}\}.
\end{eqnarray} 
\item The unique {\em dual basis} $\bLambda=\{ \Lambda_{0},
  \Lambda_{1},\ldots$, $\Lambda_{{\mult-1}} \}\subset \DDD$
  orthogonal to $B$ has the form:
\begin{eqnarray}\label{Macbasis}
\Lambda_{0}&=& \bpartial^{\balpha_0}_\bxi=1_\bxi\nonumber \\
\Lambda_{1}&=&\frac{1}{\balpha_1 !}\bpartial_\bxi^{\balpha_1}  +\sum_{|\bbeta|\leq |\balpha_1| \atop \bbeta\not\in E}\nu_{\balpha_1, \bbeta} \;\frac{1}{\bbeta !}\bpartial_\bxi^{\bbeta}\nonumber\\
&\vdots&\\
\Lambda_{\mult-1}&=&\frac{1}{\balpha_{\mult-1} !}\bpartial_\bxi^{\balpha_{\mult-1}}  +\sum_{|\bbeta|\leq  |\balpha_{\mult-1}|\atop \bbeta\not\in E}\nu_{\balpha_{\mult-1}, \bbeta}\; \frac{1}{\bbeta !}\bpartial_\bxi^{\bbeta},\nonumber
\end{eqnarray} 
\item We have 
$0=\ord(\Lambda_{0}) \leq \cdots \leq \ord(\Lambda_{\mult-1})$, and  for  all    $0\leq t\leq \nil$ we have 
$$
\DDD_t={\rm span}\left\{ \Lambda_{j}\;:\;  \ord(\Lambda_{{j}})\leq t \right\},
$$ where $\DDD_{t}$ denotes the elements of $\DDD$ of order $\leq t$, as above.
\end{enumerate}
\end{lemma}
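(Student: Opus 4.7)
I would prove all three parts via a single construction: choose the primal basis using a local monomial order, produce the unique dual basis by finite-dimensional duality, and then use the order filtration on $\DDD$ to force the claimed expansion and the filtration statement.

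\textbf{The primal basis.} I would fix a local monomial ordering $\prec$ on $\K[\bx]_{\m_{\xi}}$ that refines total degree (e.g.\ local degree-reverse-lex). Then $\mathrm{in}_{\prec}(\QQ)$ is a monomial ideal whose complement $E\subset\N^{n}$ is a finite set of cardinality $\mult$. Because $\QQ\subset\m_{\xi}$ we have $1\notin\QQ$, so $\mathbf{0}\in E$; and because $\mathrm{in}_{\prec}(\QQ)$ is closed upward under divisibility, $E$ is closed downward, yielding the ``connected to $1$'' property in the sense of \cite{Mourrain99-nf}. I then enumerate $E=\{\balpha_{0},\dots,\balpha_{\mult-1}\}$ with $\balpha_{0}=\mathbf{0}$ and $|\balpha_{i}|$ nondecreasing, so that $B=\{(\bx-\xi)^{\balpha_{i}}\}$ is a $\K$-basis of $\K[\bx]/\QQ$, establishing part~(1).

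\textbf{Existence and coarse form of $\bLambda$.} Under the pairing $(\Lambda,p)\mapsto\Lambda(p)$, the space $\DDD=\QQ^{\perp}\cap\K[\bpartial_{\bxi}]$ is identified with $(\K[\bx]/\QQ)^{*}$ and has dimension $\mult$, so there is a unique family $\bLambda\subset\DDD$ with $\Lambda_{i}((\bx-\xi)^{\balpha_{j}})=\delta_{ij}$. Writing $\Lambda_{i}=\sum_{\bbeta}c_{i,\bbeta}\frac{1}{\bbeta!}\bpartial^{\bbeta}_{\bxi}$ and invoking the preliminary identity $\frac{1}{\bbeta!}\bpartial^{\bbeta}_{\bxi}((\bx-\xi)^{\balpha})=\delta_{\balpha,\bbeta}$, orthogonality immediately forces $c_{i,\balpha_{j}}=\delta_{ij}$; hence the coefficient of $\frac{1}{\balpha_{i}!}\bpartial^{\balpha_{i}}_{\bxi}$ in $\Lambda_{i}$ is exactly $1$ and no other $E$-indexed term appears.

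\textbf{The order bound (main obstacle).} What remains for part~(2) is to show $\Lambda_{i}\in\DDD_{|\balpha_{i}|}$, restricting the extra terms to $\bbeta\notin E$ with $|\bbeta|\leq|\balpha_{i}|$. I would first verify $\DDD_{t}=\mathrm{ann}_{\DDD}(\m_{\xi}^{t+1})$: any element of $\m_{\xi}^{t+1}$ is a $\K$-combination of $(\bx-\xi)^{\bgamma}$ with $|\bgamma|\geq t+1$, which is annihilated by every derivation of order $\leq t$, and the converse is obtained by evaluating on monomials. Second, since $\prec$ refines degree, a short leading-term argument gives $\mathrm{in}_{\prec}(\QQ+\m_{\xi}^{t+1})=\mathrm{in}_{\prec}(\QQ)+\m_{\xi}^{t+1}$, so $\{(\bx-\xi)^{\balpha_{j}}:|\balpha_{j}|\leq t\}$ descends to a basis of $\K[\bx]/(\QQ+\m_{\xi}^{t+1})$. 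Setting $V_{t}=\mathrm{span}\{(\bx-\xi)^{\balpha_{j}}\bmod\QQ:|\balpha_{j}|\leq t\}$ and $W_{t}=(\m_{\xi}^{t+1}+\QQ)/\QQ$ gives a direct-sum decomposition $\K[\bx]/\QQ=V_{t}\oplus W_{t}$, and $\DDD_{t}=\mathrm{ann}(W_{t})$ pairs perfectly with $V_{t}$. By uniqueness of $\bLambda$, the dual basis of $V_{t}$ realized inside $\DDD_{t}$ must coincide with $\{\Lambda_{i}:|\balpha_{i}|\leq t\}$, so taking $t=|\balpha_{i}|$ yields $\Lambda_{i}\in\DDD_{|\balpha_{i}|}$ and the expansion~\eqref{Macbasis}.

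\textbf{Part~(3).} The nondecreasing ordering of $|\balpha_{i}|$ automatically gives $\mathrm{ord}(\Lambda_{0})\leq\cdots\leq\mathrm{ord}(\Lambda_{\mult-1})$. For fixed $t$ the inclusion $\mathrm{span}\{\Lambda_{j}:\mathrm{ord}(\Lambda_{j})\leq t\}\subseteq\DDD_{t}$ is immediate from the previous step, and equality follows from the dimension identity $\dim\DDD_{t}=\#\{j:|\balpha_{j}|\leq t\}$ already established. The one subtle point in the whole proof is the compatibility $\mathrm{in}_{\prec}(\QQ+\m_{\xi}^{t+1})=\mathrm{in}_{\prec}(\QQ)+\m_{\xi}^{t+1}$ that a degree-refining local order provides; everything else is finite-dimensional linear duality.
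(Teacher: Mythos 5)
Your proof is correct, but it runs in the opposite direction from the paper's. The paper works entirely on the dual side: fix a graded monomial ordering $\succ$ on $\K[\bpartial_\xi]$, take the initial monomials $\mathrm{In}(\DDD)$ of the finite-dimensional space $\DDD$, define $E$ from these leading exponents, and obtain $\bLambda$ as the auto-reduced basis whose leading terms realize $\mathrm{In}(\DDD)$. In that approach, $\ord(\Lambda_i)=|\balpha_i|$ and the filtration statement $\DDD_t = \mathrm{span}\{\Lambda_j : \ord(\Lambda_j)\le t\}$ come essentially for free from the fact that $\succ$ refines total degree (every element of $\DDD_t$ reduces to $0$ against the $\Lambda_j$, and only those of order $\le t$ participate). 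Connectedness of $B$ to $1$, by contrast, requires an extra argument: the paper shows $D=\mathrm{In}(\DDD)$ is stable under division by $\partial_i$ using that $\DDD$ is stable under the derivations $d_{\partial_{i,\xi}}$ and that $\succ$ is compatible with multiplication. You instead work on the primal side: choose a local ordering $\prec$ on $\K[\bx]_{\m_\xi}$ refining degree, let $E$ be the complement of $\mathrm{in}_\prec(\QQ)$, and read off connectedness instantly (the complement of a monomial ideal is downward-closed). The cost is transferred to the order bound on the $\Lambda_i$, which you establish via the identity $\DDD_t=\mathrm{ann}_\DDD(\m_\xi^{t+1})$, the compatibility $\mathrm{in}_\prec(\QQ+\m_\xi^{t+1})=\mathrm{in}_\prec(\QQ)+\m_\xi^{t+1}$, and the resulting perfect pairing between $\DDD_t$ and $V_t$; the final step correctly exploits that an element of $\DDD_t$ automatically kills $(\bx-\bxi)^{\balpha_j}$ when $|\balpha_j|>t$, so the truncated dual basis extends to the global one by uniqueness. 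One point worth flagging is that you invoke initial ideals for a local (non-well-founded) ordering; this is fine, but it implicitly leans on Mora's standard-basis theory rather than ordinary Gr\"obner theory, whereas the paper avoids this by never taking initial ideals of $\QQ$ at all and instead only taking leading monomials of the finite-dimensional vector space $\DDD$, where no termination issues arise. Overall your route is valid and perhaps more familiar to readers who think of primal bases as standard monomials; the paper's dual-side construction is leaner precisely because $\DDD$ is finite-dimensional, making the ordering arguments elementary.
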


\begin{proof}
  Let $\succ$ be any graded monomial  ordering in $\K[\bpartial]$.
  We consider the initial $\mathrm{In}(\DDD)=\{\mathrm{In}(\Lambda)\mid \Lambda \in \DDD\}$ of $\DDD$ for the monomial ordering $\succ$.
  It is a finite set of increasing monomials
$D:=\left\{\bpartial^{\balpha_0},  \bpartial^{\balpha_1},\ldots, \bpartial^{\balpha_{\mult-1}}\right\},$
which are the leading monomials of the elements of a basis 
$\bLambda=\{\Lambda_{0},  \Lambda_{1},\ldots$, $\Lambda_{{\mult-1}}\}
$ of $\DDD$.
As $1\in \DDD$ and is the lowest monomial $\succ$, we have $\Lambda_{0}=1$.
As $\succ$ is refining the total degree in $\K[\bpartial]$, we have $\ord({\Lambda}_{i})=|\balpha_{i}|$ and
$0=\ord({\Lambda}_{0}) \leq \cdots \leq \ord({\Lambda}_{\mult-1})$.
Moreover, every element in $\DDD_{t}$ reduces to $0$ by the elements in $\bLambda$.
As only the elements $\Lambda_{{i}}$ of order $\le t$ are involved in this reduction, we deduce that
$\DDD_{t}$ is spanned by the elements $\Lambda_{{i}}$ with $\ord({\Lambda}_{i})\leq t$.

Let $E=\{ \balpha_{0},\ldots, \balpha_{\mult-1}\}$.
The elements $\Lambda_{{i}}$ can be written in the form
$$
\Lambda_{{i}}=\frac{1}{\balpha_{i} !}\bpartial_\bxi^{\balpha_{i}}  +\sum_{|\bbeta|\prec |\balpha_{i}|}\nu_{\balpha_i, \bbeta}\; \frac{1}{\bbeta !}\bpartial_\bxi^{\bbeta}.
$$
By auto-reduction of the elements  $\Lambda_{{i}}$, we can even suppose that $\bbeta\not\in E$ in the summation above, so that they are of the form \eqref{Macbasis}.

Let ${B}=\left\{( \bx-\xi)^{{\balpha}_0}, \ldots, ( \bx-\xi)^{{\balpha}_{\mult-1}}\right\}\subset \K[\bx]
$. As $(\Lambda_{i}((\bx-\xi)^{{\balpha}_j}))_{0\le i,j\leq \mult-1}$ is the identity matrix, we deduce that
$B$ is a basis of $\K[\bx ]/ \QQ$, which is dual to $\bLambda$.

As $\DDD$ is stable by derivation, the leading term of $\frac{d }{d\partial_{i}}(\Lambda_{j})$ is in $D$.
If $\frac{d }{d\partial_{i}}(\bpartial_\bxi^{\balpha_{j}})$ is not zero, then it is the leading term of
$\frac{d }{d\partial_{i}}(\Lambda_{j})$, since the monomial ordering is compatible with the
multiplication by a variable. This shows that $D$ is stable by division by the variable $\partial_{i}$
and that $B$ is connected to $1$. This completes the proof.  
\end{proof}
 
A basis $\bLambda$ of $\DDD$  as described in Lemma \ref{pdlemma} can be obtained from any other basis $\tilde{\bLambda}$ of ${\DDD}$ by first choosing pivot elements that are the leading monomials with respect to a degree monomial ordering on $\K[\bpartial]$, then transforming  the coefficient matrix of $\tilde{\bLambda}$ into row echelon form using the pivot leading coefficients.
The integration method described in \cite{mantzaflaris:inria-00556021} computes a primal-dual pair
such that the coefficient matrix has a block row-echelon form, each block being associated to an order.
The computation of a basis as in Lemma \ref{pdlemma} can be then performed order by order.

\begin{example}\label{ex:mult3} Let 
$$f_1=x_1-x_2+x_1^2, f_2= x_1-x_2+x_1^2,
$$
which has a multiplicity $3$ root at $\bxi=(0,0)$.  The integration method described in~\cite{mantzaflaris:inria-00556021} computes a primal-dual pair 
$$
\tilde{B}=\left\{1,x_1,x_2\right\}, \; \tilde{\bLambda}=\left\{1, \partial_1+\partial_2, \partial_2+\frac{1}{2}\partial_1^2+ \partial_1\partial_2+\frac{1}{2}\partial_1^2\right\}.
$$
This primal dual pair does not form an orthogonal pair, since $( \partial_1+\partial_2)(x_2)\neq 0$. However, using let say the degree lexicographic ordering such that $x_1>x_2$, we easily deduce the primal-dual pair of Lemma \ref{pdlemma}:
$$
{B}=\left\{1,x_1,x_1^2\right\}, \quad \bLambda=\tilde{\bLambda}.
$$
\end{example}
\vspace{2mm}

Throughout this section we assume that we are given a fixed primal basis $B$ for $\K[\bx ]/ \QQ$
such that a dual basis $\bLambda$ of $\DDD$ satisfying the properties of Lemma \ref{pdlemma} exists. Note that such a primal basis $B$  can be computed numerically  from an approximation of $\xi$ and using a modification of the integration method of  \cite{mantzaflaris:inria-00556021}.

A  dual basis can also be computed by
Macaulay's dialytic method which can be used to deflate the root $\bxi$ as
in \cite{lvz08}. This method would introduce
\mbox{$n+(\mult-1)
\left({{n+\nil}\choose{n}}-\mult\right)$} new variables, which is not polynomial in $\nil$. Below, we give a construction of a 
polynomial system that only depends on at most 
$n+ n\mult(\mult-1)/2$ variables. These variables 
correspond to the entries of the {\em multiplication matrices} that we~define~next.
Let 
\begin{eqnarray*}
M_{i} : \K[\bx]/Q&\rightarrow &  \K[\bx]/Q\\
  p & \mapsto & (x_{i}-\xi_{i})\, p
\end{eqnarray*} 
be the multiplication operator by $x_{i}-\xi_{i}$ in 
$\K[\bx]/Q$. Its transpose operator is
\begin{eqnarray*}
M_{i}^{t} : \DDD&\rightarrow &  \DDD\\
  \Lambda & \mapsto & \Lambda \circ M_{i}= (x_{i}-\xi_{i})\cdot \Lambda = \frac{d}{d\partial_{i,\xi}} (\Lambda)=d_{\partial_{i, \xi}}(\Lambda),
\end{eqnarray*} 
where $\DDD= Q^{\perp}\subset \K[\bpartial]$. The matrix of
$M_{i}$ in the basis $B$ of  $\K[\bx]/Q$ is denoted $\mM_{i}$.

As $B$ is a basis of $\K[\bx]/Q$, we can identify the elements of
$\K[\bx]/Q$ with the elements of the vector space $\sp_\K( B)$. 
We define the normal form $N(p)$ of a polynomial $p$ in $\K[\bx]$ as the
unique element $b$ of ${\rm span}_\K(B)$ such
that $p-b\in Q$. Hereafter, we are going to identify the elements of
$\K[\bx]/Q$ with their normal form in $\sp_\K (B )$.

For any polynomial $q (x_{1}, \ldots, x_{n}) \in \K[\bx]$, we denote by  $q
(\xi+\bM)$ be the operator on $\K[\bx]/Q$ obtained by replacing $x_{i}-\xi_{i}$ by $M_{i}$, i.e. it is defined as
$$q(\xi+\bM):= \sum_{\bgamma\in \N^n} \frac{1}{\bgamma!}  \bpartial_{\xi}^{\bgamma}(q) \, \bM^{\bgamma}, $$
using the notation  $\bM^{\bgamma}:=M_1^{\gamma_1}\circ\cdots\circ M_n^{\gamma_n}$.  Similarly,  we denote by 
$$q(\xi+\mM):= \sum_{\bgamma\in \N^n} \frac{1}{\bgamma!}\bpartial_{\xi}^{\bgamma}(q)  \, \mM^{\bgamma}, $$
the matrix of $q(\xi+\bM)$ in the basis $B$ of $\K[\bx]/Q$, where  $\mM^{\bgamma}:=\mM_1^{\gamma_1}\cdots \mM_n^{\gamma_n}$.
Note that the operators $ \{M_i\}$ and the multiplication matrices $\{\mM_i\}$ are pairwise commuting. 

\begin{lemma}\label{lem:normalform}
For any $q\in \K[\bx]$, the normal form of $q$ is $N (q)= q (\xi+\bM) (1)$ and we
have
$$ 
q (\xi+\bM) (1) = \Lambda_{0}(q)\, 1 + \Lambda_{1}(q) \, ( \bx-\bxi)^{\balpha_1}+\cdots + \Lambda_{{\mult-1}}(q)\, ( \bx-\bxi)^{\balpha_{\mult-1}}.
$$
\end{lemma}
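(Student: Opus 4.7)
The plan is to derive both equalities directly from Taylor's expansion together with the duality between $B$ and $\bLambda$. There is essentially no obstacle: the key observation is that the operator $q(\xi+\bM)$ is, by construction, just the Taylor polynomial of $q$ at $\bxi$ with $(x_i-\xi_i)$ replaced by the multiplication operator $M_i$, so applying it to $1$ formally matches reducing $q$ modulo $Q$.

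For the first equality, I would start by writing the finite Taylor expansion
\[
q(\bx)=\sum_{\bgamma\in\N^{n}}\frac{1}{\bgamma!}\,\bpartial^{\bgamma}_{\xi}(q)\,(\bx-\bxi)^{\bgamma}.
\]
Since $M_i$ acts on $\K[\bx]/Q$ as multiplication by $(x_i-\xi_i)$, an immediate induction gives $\bM^{\bgamma}(1)=N((\bx-\bxi)^{\bgamma})$. Because $N$ is $\K$-linear and coincides with the quotient map into $\sp_{\K}(B)$, applying $N$ termwise to the Taylor expansion yields
\[
N(q)=\sum_{\bgamma}\frac{1}{\bgamma!}\,\bpartial^{\bgamma}_{\xi}(q)\,\bM^{\bgamma}(1)=q(\xi+\bM)(1),
\]
which is exactly the first claim.

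For the second equality, since $N(q)\in\sp_{\K}(B)$, I can write uniquely $N(q)=\sum_{i=0}^{\mult-1}c_{i}\,(\bx-\bxi)^{\balpha_{i}}$ and compute $c_{j}$ by applying $\Lambda_{j}$. By the orthogonality $\Lambda_{j}((\bx-\bxi)^{\balpha_{i}})=\delta_{ij}$ from Lemma~\ref{pdlemma}, one gets $c_{j}=\Lambda_{j}(N(q))$. Finally, each $\Lambda_{j}$ lies in $\DDD=Q^{\perp}$ (using the identification of Lemma~\ref{lem:primcomp}), and since $q-N(q)\in Q$ by definition of the normal form, we have $\Lambda_{j}(q-N(q))=0$, hence $c_{j}=\Lambda_{j}(q)$. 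Combining these two steps gives
\[
q(\xi+\bM)(1)=N(q)=\sum_{j=0}^{\mult-1}\Lambda_{j}(q)\,(\bx-\bxi)^{\balpha_{j}},
\]
which completes the proof.
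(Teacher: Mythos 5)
Your proof is correct and follows essentially the same route as the paper's, which simply asserts $q(\xi+\bM)(1)=q \bmod Q = N(q)$ and invokes the orthogonality of $\bLambda$ and $B$; you have merely unpacked those two one-line steps (Taylor expansion plus $\bM^{\bgamma}(1)=N((\bx-\bxi)^{\bgamma})$, and the duality computation for the coefficients $c_j=\Lambda_j(N(q))=\Lambda_j(q)$) in explicit detail.
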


\begin{proof}
We have $q(\xi+\bM)(1)=q\!\mod Q=N(q)$. The second claim follows from the orthogonality of $\bLambda$ and $B$. 
\end{proof}

This shows that the coefficient vector $[p]$ of $N (p)$ in the basis $B$ of
 is $[p]= (\Lambda_{{i}}(p))_{0\le i \le \mult-1}$.
 
The  following lemma is also well known, but we include it here with proof:
\begin{lemma}\label{multlemma} Let $B$ as in (\ref{pbasis}) and denote the exponents in $B$ by 
$
E:=\{\alpha_0, \ldots, \alpha_{\mult-1}\}
$ as above.
Let  
$$E^+:=\bigcup_{i=1}^n (E+ \e_i )$$ 
with $E+\e_i=\{(\gamma_1, \ldots , \gamma_i+1, \ldots, \gamma_n):\gamma\in E\}$ and 
we denote $\partial(E)= E^{+}\setminus E$. 
The values of the coefficients $\nu_{\alpha, \beta}$
for $(\alpha,\beta)\in E\times \partial(E)$ appearing in the dual basis (\ref{Macbasis})
uniquely determine the system of pairwise commuting multiplication  matrices $\mM_{i}$, namely,  
for $i=1, \ldots, n$ 
\begin{eqnarray}\label{multmat}
\mM_{i}^{t}=
\begin{array}{|ccccc|}
\cline{1-5}
 0&\nu_{\balpha_1, \e_i}&\nu_{\balpha_2, \e_i}&\cdots &\nu_{\balpha_{\mult-1}, \e_i}  \\ 
 0&0&\nu_{\balpha_2,\balpha_1+\e_i}&\cdots &\nu_{\balpha_{\mult-1},\balpha_1+\e_i}\\  
 \vdots &\vdots &&&\vdots\\
0&0&0&\cdots &\nu_{\balpha_{\mult-1},\balpha_{\mult-2}+\e_i}\\ 
0&0&0&\cdots &0\\
\cline{1-5} 
\end{array}
\end{eqnarray} 
Moreover, 
$$
\nu_{\alpha_i, \alpha_k+\e_j}=\begin{cases} 1 & \text{ if } \alpha_i= \alpha_k+\e_j\\
0 & \text{ if } \alpha_k+\e_j \in E, \; \alpha_i \neq \alpha_k+\e_j .
\end{cases}
$$
\end{lemma}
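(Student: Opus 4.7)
The plan is to compute the entries of $\mM_i^t$ directly by exploiting the orthogonality between the primal basis $B$ and the dual basis $\bLambda$. Recall that $\mM_i^t$ is the matrix in the basis $\bLambda$ of the transposed operator $M_i^t\colon \Lambda \mapsto (x_i-\xi_i)\cdot\Lambda = d_{\partial_{i,\xi}}(\Lambda)$. Since $\Lambda_k((\bx-\xi)^{\alpha_j}) = \delta_{jk}$, expanding $M_i^t(\Lambda_k)$ in the basis $\bLambda$ and evaluating at $(\bx-\xi)^{\alpha_j}$ shows that the $(j,k)$ entry of $\mM_i^t$ equals
$$
\bigl((x_i-\xi_i)\cdot\Lambda_k\bigr)\bigl((\bx-\xi)^{\alpha_j}\bigr)
= \Lambda_k\bigl((\bx-\xi)^{\alpha_j+\e_i}\bigr).
$$

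Next I would plug the explicit expression \eqref{Macbasis} for $\Lambda_k$ into this formula. Using $\frac{1}{\beta!}\bpartial_\xi^\beta\bigl((\bx-\xi)^\gamma\bigr)=\delta_{\beta,\gamma}$, the only surviving term comes from matching $\beta=\alpha_j+\e_i$. Since $\alpha_j\in E$ implies $\alpha_j+\e_i\in E^+=E\sqcup\partial(E)$, there are two cases. If $\alpha_j+\e_i\in E$, the leading-monomial term of $\Lambda_k$ contributes $1$ precisely when $\alpha_k=\alpha_j+\e_i$, and $0$ otherwise; this gives the claimed convention $\nu_{\alpha_i,\alpha_k+\e_j}=\delta_{\alpha_i,\,\alpha_k+\e_j}$ in the extended notation. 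If $\alpha_j+\e_i\in\partial(E)$, the sum in \eqref{Macbasis} contributes the coefficient $\nu_{\alpha_k,\alpha_j+\e_i}$, provided $|\alpha_j+\e_i|\leq|\alpha_k|$; otherwise the coefficient is absent, which we encode by setting $\nu_{\alpha_k,\beta}=0$.

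For the strict upper triangular shape, I would use the ordering from Lemma~\ref{pdlemma}(3): $\ord(\Lambda_k)=|\alpha_k|$ and $|\alpha_0|\leq|\alpha_1|\leq\cdots\leq|\alpha_{\mult-1}|$. Whenever $j\geq k$, we have $|\alpha_j+\e_i|=|\alpha_j|+1>|\alpha_k|$, and since $\Lambda_k$ only involves differentials of order at most $|\alpha_k|$, the evaluation $\Lambda_k\bigl((\bx-\xi)^{\alpha_j+\e_i}\bigr)$ vanishes. This kills the diagonal and all entries below it, yielding the displayed triangular form \eqref{multmat} with entries $\nu_{\alpha_k,\alpha_j+\e_i}$ strictly above the diagonal.

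Finally, pairwise commutativity of the $\mM_i$ is immediate since they represent multiplication by $x_i-\xi_i$ in the commutative algebra $\K[\bx]/Q$. Uniqueness of the matrices in terms of the coefficients $\nu_{\alpha,\beta}$ with $(\alpha,\beta)\in E\times\partial(E)$ then follows from the explicit formula above. I expect the only delicate point to be the bookkeeping that distinguishes the $E$ and $\partial(E)$ cases for $\alpha_j+\e_i$; the extended convention stated at the end of the lemma is precisely what unifies these two cases into the single expression $\nu_{\alpha_k,\alpha_j+\e_i}$ appearing in the matrix \eqref{multmat}.
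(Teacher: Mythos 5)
Your proof is correct and follows essentially the same route as the paper's: both arguments compute the $(j,k)$ entry of $\mM_i^t$ as $\Lambda_k\bigl((\bx-\bxi)^{\alpha_j+\e_i}\bigr)$ via orthogonality, identify it with $\nu_{\alpha_k,\alpha_j+\e_i}$ (using the normalization of the dual basis in \eqref{Macbasis} to dispatch the cases $\alpha_j+\e_i\in E$ versus $\alpha_j+\e_i\in\partial(E)$), and derive strict upper triangularity from the degree ordering of Lemma~\ref{pdlemma}(3). The only cosmetic difference is that the paper phrases the triangularity via $M_i^t$ being a derivation that lowers order, while you make the same order-comparison explicit entry-by-entry; the substance is identical.
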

\begin{proof}
As $M_{i}^{t}$ acts as a derivation on $\DDD$ and $\DDD$ is closed under derivation,  so the third property in Lemma \ref{pdlemma} implies that the matrix
of $M_{i}^{t}$ in the basis $\bLambda=\{\Lambda_0, \ldots, \Lambda_{\mult-1}\}$ of $\DDD$ has an upper triangular form with
zero (blocks) on the diagonal.

For an element $\Lambda_{{j}}\in \bLambda $ of order $k$, its image by
$M_{i}^{t}$ is 
{
  \begin{eqnarray*}
&&M_{i}^{t} (\Lambda_{{j}}) =
(x_{i} - \xi_{i}) \cdot \Lambda_{{j}}\\
&&=\sum_{|\balpha_{l}|<k} \Lambda_{{j}} ((x_{i}-\xi_{i})
(\bx-\bxi)^{\balpha_{l}}) \Lambda_{{l}}\\
 &&=  \sum_{|\balpha_{l}|<k} \Lambda_{{j}}
((\bx-\bxi)^{\balpha_{l}+\e_{i}}) \, \Lambda_{{l}}
 =  \sum_{|\balpha_{l}|<k} 
\nu_{\balpha_{j}, \balpha_l+ \e_i} \Lambda_{{l}}.
\end{eqnarray*} }
This shows that the entries of $\mM_{i}$ are the coefficients of the
dual basis elements corresponding to exponents in $E\times \partial(E)$. The second claim is clear from the definition of $\mM_{i}$.
\end{proof}

The previous lemma shows that the dual basis uniquely defines the system of multiplication matrices for $  i=1, \ldots, n$
{
\begin{eqnarray*}
\mM_{i}^{t}&=&\begin{array}{|ccc|}
\cline{1-3}
 \Lambda_{{0}}(x_i-\xi_i)&\cdots &\Lambda_{{\mult-1}}(x_i-\xi_i)  \\ 
 \Lambda_{{0}}\left(( \bx-\bxi)^{\balpha_1+\e_i}\right)&\cdots &\Lambda_{{\mult-1}}\left(( \bx-\bxi)^{\balpha_1+\e_i}\right) \\  
 \vdots &&\vdots\\
 \Lambda_{{0}}\left(( \bx-\bxi)^{\balpha_d+\e_i}\right)&\cdots &\Lambda_{{\mult-1}}\left(( \bx-\bxi)^{\balpha_\mult+\e_i}\right) \\  
\cline{1-3} 
\end{array}\nonumber\\
&=& 
\begin{array}{|ccccc|}
\cline{1-5}
  0&\nu_{\balpha_1, \e_i}&\nu_{\balpha_2, \e_i}&\cdots &\nu_{\balpha_{\mult-1}, \e_i}  \\ 
 0&0&\nu_{\balpha_2,\balpha_1+\e_i}&\cdots &\nu_{\balpha_{\mult-1},\balpha_1+\e_i}\\  
 \vdots &\vdots &&&\vdots\\
0&0&0&\cdots &\nu_{\balpha_{\mult-1},\balpha_{\mult-2}+\e_i}\\ 
0&0&0&\cdots &0\\
\cline{1-5} 
\end{array}
\end{eqnarray*}}
Note that these matrices are nilpotent by their upper triangular
structure, and all $0$ eigenvalues. 
As $\nil$ is the maximal order of the elements of $\DDD$, we have
$\mM^{\bgamma}=0$ if $|\bgamma|> \nil$.\\


Conversely, the system of multiplication matrices $\mM_1, \ldots, \mM_n$ uniquely defines the dual basis as follows. Consider $\nu_{\balpha_i,\bgamma}$  for some  $(\balpha_i, \bgamma)$ such that $|\bgamma|\leq \nil$ but $\bgamma \not\in E^+$.  We can uniquely determine $\nu_{\balpha_i,\bgamma}$ from the values of $\{\nu_{\balpha_j, \bbeta}\; : \;(\balpha_j, \bbeta)\in E\times \partial(E)\}$  from the following identities:
\begin{equation}\label{restnu}
\nu_{\balpha_i,\bgamma}= \Lambda_{i}((\bx -\bxi)^{\bgamma})
=[\mM_{(\bx -\xi)^{\bgamma}}]_{i, 1}= [\mM^{\bgamma}]_{i,1}.
\end{equation}

The next definition defines the {\em parametric multiplication matrices} that we use  in our constriction.

\begin{definition}[Parametric multiplication matrices] Let   $E$, $\partial(E)$ as in Lemma \ref{multlemma}.  We define an array  ${\bmu}$ of length $n\mult(\mult-1)/2$ consisting of $0$'s, $1$'s and the variables 
  $\mu_{\alpha_i, \beta}$
  as follows: for all $\alpha_i, \alpha_k\in E$ and $j\in \{1, \ldots, n\}$ the corresponding entry is 
\begin{eqnarray}\label{defmuE}
{\bmu}_{\alpha_i, \alpha_k+\e_j}=\begin{cases} 1 & \text{ if } \alpha_i= \alpha_k+\e_j\\
0 & \text{ if } \alpha_k+\e_j \in E, \; \alpha_i \neq \alpha_k+\e_j \\
\mu_{\alpha_i, \alpha_k+\e_j} & \text{ if  } \alpha_k+\e_j \not\in E.
\end{cases}
\end{eqnarray}
The {\em parametric multiplication matrices} corresponding to $E$ are defined  
for $i=1, \ldots, n$ by
\begin{equation}\label{Mmu}
\mM_{i}^{t}({\bmu}):= \begin{array}{|ccccc|}
\cline{1-5}
  0&\bmu_{\balpha_1, \e_i}&\bmu_{\balpha_2, \e_i}&\cdots &\bmu_{\balpha_{\mult-1}, \e_i}  \\ 
 0&0&\bmu_{\balpha_2,\balpha_1+\e_i}&\cdots &\bmu_{\balpha_{\mult-1},\balpha_1+\e_i}\\  
 \vdots &\vdots &&&\vdots\\
0&0&0&\cdots &\bmu_{\balpha_{\mult-1},\balpha_{\mult-2}+\e_i}\\ 
0&0&0&\cdots &0\\
\cline{1-5} 
\end{array} ,
\end{equation}
We denote by 
$$
\mM(\bmu)^\bgamma:=\mM_1(\bmu)^{\gamma_1}\cdots\mM_n(\bmu)^{\gamma_n},
$$
and note  that for general parameters values $\bmu$, the matrices  $\mM_i(\mu)$ do not commute, so we fix their order by their indices in the above definition of  $\mM(\mu)^\bgamma$. Later we will introduce equations to  enforce pairwise commutation of the parametric multiplication matrices, see Theorems \ref{theorem1} and \ref{theorem2}.  
\end{definition}

\begin{remark} \label{reduce} Note that we can reduce the number of free parameters in the parametric multiplication matrices by further exploiting the commutation rules of the multiplication matrices corresponding to a given  primal basis $B$. For example, consider the breadth one case, where we can assume that $E=\{{\bf 0}, \e_1, 2\e_1, \ldots, (\delta-1)\e_1\}$. In this case free parameters only appear in the first columns of $\mM_2(\mu), \ldots, \mM_n(\mu)$,  the other columns are shifts of these.   Thus, it is enough to introduce $ (n-1)(\delta-1)$ free parameters,  similarly as in \cite{LiZhi2013}. In Section \ref{Sec:Examples} we present a modification of \cite[Example 3.1]{LiZhi2013} which has breadth two, but also uses at 
most $ (n-1)(\delta-1)$ free parameters.
\end{remark}

\begin{definition}[Parametric normal form] \label{parnorm} Let $\KK\subset \K$ be a field. We define
\begin{eqnarray*}
\Nc_{\bz,\bmu} : \KK[\bx] & \rightarrow & \KK[\bz,\bmu]^{\mult}\\
p& \mapsto& \Nc_{\bz,\bmu} (p) := p(\bz +\mM(\mu))[1]= \sum_{\bgamma\in \N^n}
\frac{1}{\bgamma!} \bpartial_{\bz}^{\bgamma}(p) \, \mM({\bmu})^{\bgamma}[1].
\end{eqnarray*}
where $[1]=[1,0,\ldots,0]$ is the coefficient vector of $1$ in the basis $B$. This sum has bounded degree for all $p$ since for $|\bgamma|> \nil$, 
$\mM({\bmu})^{\bgamma}=0$, so the entries of $\Nc_{\bz,\bmu} (p)$ are polynomials in ${\bmu}$ of degree at most $\nil$.

\end{definition}

Notice that this notation is not ambiguous, assuming that the matrices $\mM_{i}(\mu)$
($i=1,\ldots,n$)  are commuting.
The specialization at $(\bx,\bmu)= (\bxi,\bnu)$ gives the coefficient vector $[p]$ of $N(p)$:
$$ 
\Nc_{\bxi,\bnu} (p) =[\Lambda_{{0}} (p),
\ldots, \Lambda_{{\mult-1}} (p)]^{t}\in \K^{\mult}.
$$

\subsection{The multiplicity structure equations of a singular point}

\noindent We can now characterize the multiplicity structure by
polynomial equations.
\begin{theorem} \label{theorem1}
Let $\KK\subset \C$ be any field, $\fb\in\KK[\bx]^N$, and let $\bxi\in
\C^n$ be an isolated solution of $\fb$.  Let $Q$ be the primary ideal at $\xi$ and assume that $B$ is a basis for $\KK[\bx]/Q$ satisfying the conditions of Lemma \ref{pdlemma}. Let $E\subset\N^n $ be as in (\ref{E}) and $\mM_i(\bmu)$ for $i=1, \ldots n$ be the parametric multiplication matrices corresponding to $E$ as in~(\ref{Mmu})  and~$\Nc_{\bxi,\bmu}$ be the parametric normal form as in Defn.~\ref{parnorm} at $\bz=\bxi$.  
Then the ideal $L_{\bxi}$ of $\C[\bmu]$ generated by the polynomial system 
{
\begin{eqnarray}\label{matrixeq}
\begin{cases} 
\Nc_{\bxi,\bmu} (f_{k})\;\; \text{ for } k=1, \ldots, N, \\
\mM_{i}({\bmu})\cdot \mM_{j}({\bmu})-\mM_{i}({\bmu})\cdot \mM_{i}({\bmu})\;\;\text{ for } i, j=1, \ldots, n
\end{cases}
\end{eqnarray}}
is the maximal ideal
$$
\m_{\nu}= (\bmu_{\balpha, \bbeta}- \nu_{\balpha, \bbeta}, (\balpha,\bbeta)\in E\times \partial(E))
$$ 
where $\nu_{\balpha, \bbeta}$ are the coefficients of the dual basis defined in~(\ref{Macbasis}). 
\end{theorem}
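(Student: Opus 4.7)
The plan is to verify both inclusions between $L_\bxi$ and $\m_\bnu$. The easy direction $L_\bxi\subseteq\m_\bnu$ amounts to checking that $\bmu=\bnu$ satisfies every generator: by Lemma \ref{multlemma}, the matrices $\mM_i(\bnu)$ are the genuine multiplication matrices of $x_i-\xi_i$ on $\K[\bx]/Q$ in the basis $B$ and hence pairwise commute, while by Lemma \ref{lem:normalform}, $\Nc_{\bxi,\bnu}(f_k)$ is the coordinate vector of $N(f_k)\in\K[\bx]/Q$, which is $0$ since $f_k\in(\fb)\subseteq Q$.

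For the reverse inclusion, I will first establish the set-theoretic statement that $\bnu$ is the only common zero of the generators. Fix any common zero $\bmu^*$. The commutativity equations make $x_i\mapsto\xi_i I+\mM_i(\bmu^*)$ extend to a $\K$-algebra map $\K[\bx]\to\mathrm{End}(\C^\mult)$, turning $\C^\mult$ into a $\K[\bx]$-module; define
\begin{equation*}
\psi^*:\K[\bx]\to\C^\mult,\qquad p\mapsto p(\bxi+\mM(\bmu^*))\,e_0=\Nc_{\bxi,\bmu^*}(p), \quad e_0=[1,0,\ldots,0]^t.
\end{equation*}
Then $\psi^*$ is a $\K[\bx]$-module map and its kernel $K^*$ is an ideal containing $(\fb)$ by the vanishing equations. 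Since each $\mM_i(\bmu^*)$ is strictly lower triangular of size $\mult$, it is nilpotent of order $\le\mult$, so $(x_i-\xi_i)^\mult\in K^*$ and $K^*$ is $\m_\bxi$-primary. As non-zerodivisors modulo an $\m_\bxi$-primary ideal are exactly the elements of $\K[\bx]\setminus\m_\bxi$, the $\m_\bxi$-primary component $Q$ of $(\fb)$ satisfies $Q\subseteq K^*$, and $\psi^*$ descends to a $\K[\bx]$-module map $\bar\psi^*:\K[\bx]/Q\to\C^\mult$.

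I would then show that $\bar\psi^*$ carries the basis $B$ bijectively to the standard basis of $\C^\mult$, which makes it a module isomorphism and thus forces the matrices representing multiplication by $x_l-\xi_l$ on both sides to agree, i.e.\ $\mM_l(\bmu^*)=\mM_l(\bnu)$ for each $l$, hence $\bmu^*=\bnu$. The basis statement is proved by induction on $|\alpha_i|$: the base case $\alpha_0=0$ gives $\bar\psi^*(1)=e_0$; since $B$ is connected to $1$, for $|\alpha_i|\ge 1$ we can write $\alpha_i=\alpha_j+\e_l$ with $\alpha_j\in E$, and the module-map and commutativity properties give
\begin{equation*}
\bar\psi^*((\bx-\bxi)^{\alpha_i})=\mM_l(\bmu^*)\,\bar\psi^*((\bx-\bxi)^{\alpha_j})=\mM_l(\bmu^*)\,e_j,
\end{equation*}
whose entries are $\bmu^*_{\alpha_k,\alpha_j+\e_l}=\bmu^*_{\alpha_k,\alpha_i}$; since $\alpha_i\in E$, (\ref{defmuE}) forces these entries to be $1$ when $\alpha_k=\alpha_i$ and $0$ otherwise, independent of the free parameters, so this column equals $e_i$.

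The principal technical hurdle I expect is the descent step through $\K[\bx]/Q$, which requires combining the $\m_\bxi$-primariness of $K^*$ with the localization characterization of $Q$ as the contraction of $(\fb)\,\K[\bx]_{\m_\bxi}$. A secondary issue is that the above argument directly yields $V(L_\bxi)=\{\bnu\}$, hence $\sqrt{L_\bxi}=\m_\bnu$ by the Nullstellensatz; upgrading this to the equality $L_\bxi=\m_\bnu$ of ideals (i.e.\ reducedness of the quotient at $\bnu$) can be obtained by applying the module isomorphism $\bar\psi^*$ in a first-order neighborhood of $\bnu$ to show that the Jacobian of the generators with respect to $\bmu$ at $\bmu=\bnu$ has full rank.
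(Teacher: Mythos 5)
Your easy direction and the core mechanism of the hard direction --- build a $\K[\bx]$-module map out of the (commuting) parametric multiplication matrices, show its kernel is an $\m_\bxi$-primary ideal containing $Q$, and show the induced map $\K[\bx]/Q\to\C^\mult$ sends the basis $B$ to the standard basis --- are correct and closely parallel the paper's argument. But the way you deploy this mechanism leaves a genuine gap. By specializing $\bmu$ to a concrete common zero $\bmu^*$, your argument proves only the set-theoretic statement $V(L_\bxi)=\{\bnu\}$, hence $\sqrt{L_\bxi}=\m_\bnu$ by the Nullstellensatz; together with the easy inclusion $L_\bxi\subseteq\m_\bnu$ this is still strictly weaker than $L_\bxi=\m_\bnu$. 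You recognize this and gesture at a ``full-rank Jacobian at $\bnu$'' argument, but that step is exactly the content that needs to be proved, and nothing in the set-theoretic analysis supplies it. Making it precise would mean rerunning your entire module argument over $\C[\epsilon]/(\epsilon^{2})$ (showing the only first-order deformation of $\bnu$ satisfying the equations is trivial), which you do not do.

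The paper's proof avoids the two-stage reasoning by carrying out the same module computation not over a specialization, but over the universal quotient $C=\K[\bmu]/L_\bxi$. Since the commutation relations lie in $L_\bxi$, the map $\Phi:\K[\bx]\to C^{\mult}$, $p\mapsto \Nc_{\bxi,\bmu}(p)\bmod L_\bxi$, already has ideal kernel $K\supseteq Q$ and satisfies $\Phi((\bx-\bxi)^{\alpha_k})=\e_k$ in $C^\mult$. Then applying $\Phi$ to $(x_i-\xi_i)(\bx-\bxi)^{\alpha_k}$ on the one hand via the product rule, and on the other hand by first reducing modulo $Q$ using the true multiplication matrix $\mM_i(\bnu)$, gives the identity $\mM_i(\bmu)\e_k=\mM_i(\bnu)\e_k$ \emph{in $C^\mult$}. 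Reading off entries is precisely the ideal membership $\bmu_{\alpha,\beta}-\nu_{\alpha,\beta}\in L_\bxi$, i.e.\ $\m_\bnu\subseteq L_\bxi$. Working over $C$ instead of over $\C$ is not a cosmetic change: it is what upgrades your zero-set conclusion to the generation-by-linears conclusion in one pass. If you want to repair your version, the cleanest route is to observe that nothing in your argument used that $\bmu^*$ lies in $\C$; rerun it with $\bmu^*$ taken to be the image of $\bmu$ in $C=\K[\bmu]/L_\bxi$, and the same chain of equalities then lives in $C^\mult$ and yields the ideal-theoretic result directly.
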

\begin{proof}
 As before, the system \eqref{matrixeq} has a solution
$\bmu_{\balpha,\bbeta}=\nu_{\balpha,\bbeta}$ for $(\balpha,\bbeta)\in
E\times \partial(E)$. Thus $L_{\bxi}\subset \m_{\nu}$.

Conversely, let $C=\K[\mu]/L_{\bxi}$ and consider the map 
$$ 
 \Phi: \K[\bx] \rightarrow C^{\mult}, \;\;
p  \mapsto  \Nc_{\bxi ,\bmu}(p)\!\mod L_\xi.
$$
Let $K$ be its kernel.
Since the matrices $\mM_{i} (\bmu)$ are commuting
modulo $L_{\bxi}$, we can see that  $K$ is an ideal. 
As $f_{k}\in K$, we have $I:=\langle f_{1}, \ldots, f_N\rangle \subset K$. 

Next we show that $ Q \subset K$.
By construction, for any $\alpha\in \N^{n}$ we have modulo $L_\xi$
\begin{equation*}\label{eq:rel1}
\Nc_{\bxi ,\bmu}((\bx-\bxi)^{\alpha})= \sum_{\bgamma\in \N^n}
\frac{1}{\bgamma!} \bpartial_{\bxi}^{\bgamma}((\bx-\bxi)^{\alpha}) \, \mM({\bmu})^{\bgamma}[1] =  \mM({\bmu})^{\alpha}[1].
\end{equation*}
Using the previous relation, we check that $\forall p,q \in \K[\bx]$, 
\begin{equation}\label{eq:prod}
\Phi(p q) = p(\bxi+\mM (\bmu)) \Phi(q)
\end{equation}
Let $q\in Q$. As $Q$ is the $\m_{\bxi}$-primary component of $I$,
there exists $p\in \C[\bx]$ such that $p (\bxi)\neq 0$ and $p\, q\in
\I$. By~\eqref{eq:prod},~we~have 
$$ 
\Phi (p\, q)= p(\bxi+\mM (\bmu)) \Phi (q) = 0.
$$
Since $p (\bxi)\neq 0$ and $ p(\bxi+\mM (\bmu))= p (\bxi) Id + N$ with $N$
lower triangular and nilpotent, $p(\bxi+\mM (\bmu))$ is invertible.
We deduce that $\Phi(q)= p (\bxi+\mM (\bmu))^{-1}\Phi (pq) =0$ and $q \in K$.

Let us show now that $\Phi$ is surjective and more precisely, that
$\Phi ((\bx-\bxi)^{\alpha_{k}})=\e_{k}$  for $k=0, 
\ldots, \mult-1$ (abusing the notation, as here
$\e_k$ has length $\mult$ not $n$ and $\e_i$ has a $1$ in position $i+1$). Since $B$ is connected to $1$,
either $\alpha_k=0$ or there exists $\alpha_j\in E$ such that
$\alpha_k=\alpha_j+\e_i$ for some $i\in \{1, \ldots, n\}$. Thus the
$j^{\rm th}$ column of $\mM_i({\bmu})$ is $\e_k$ by (\ref{defmuE}).  As
$\{\mM_{i}({\bmu}): i=1, \ldots, n\}$ are  pairwise commuting, we have
$\mM({\bmu})^{\alpha_k}=\mM_i(\bmu) \mM({\bmu})^{\alpha_j}$, and if we
assume by induction on $|\alpha_{j}|$ that  $\mM({\bmu})^{\alpha_j}[1]=\e_j$, we obtain \mbox{$\mM({\bmu})^{\alpha_k}[1]=\e_k$}. 
Thus, for $k = 0,\dots,\mult-1$,  
$\Phi ((\bx-\bxi)^{\alpha_{k}})=\e_{k}$.

We can now prove that $\m_{\nu}\subset L_{\bxi}$. As $M_{i}(\nu)$ is the multiplication by $(x_{i}-\xi_{i})$
in $\C[\bx]/Q$, for any $b\in B$ and $i=1,\ldots,n$, we have
$(x_{i}-\xi_{i})\, b = M_{i}(\nu) (b) + q$ with $q\in Q\subset K$. We deduce that for $k=0,\ldots,\mult-1$,
$$\Phi ((x_{i}-\xi_{i})\, (\bx -\bxi)^{\alpha_{k}}) =  \mM_{i}(\bmu) \Phi
((\bx -\bxi)^{\alpha_{k}}) = \mM_{i}(\bmu) \e_{k} =
\mM_{i}(\nu) \e_{k}.$$

This shows that $\bmu_{\alpha,\beta}-\nu_{\alpha,\beta}\in L_{\bxi}$
for $(\alpha,\beta) \in E\times\partial(E)$ and that $\m_{\nu}=L_{\bxi}$.
\end{proof}

In the proof of the next theorem  we  need to consider cases when the multiplication matrices do not commute. We introduce the following definition:

\begin{definition}\label{commutator}
Let $\KK\subset \C$ be any  field. Let $\Cc$ be the ideal of $\KK[\bz,\mu]$ generated by entries of the commutation
relations:
$\mM_{i}({\bmu})\cdot \mM_{j}({\bmu})-\mM_{j}({\bmu})\cdot
\mM_{i}({\bmu})=0$, $i,j=1,\ldots,n$. We call $\Cc$ the {\em commutator ideal}.
\end{definition}

\begin{lemma}\label{commlemma} For any field $\KK\subset \C$, $p\in \KK[\bx]$, and $i=1,\ldots,n$, we have
\begin{equation}
\Nc_{\bz,\bmu} ( x_{i} p) = z_{i} \Nc_{\bz, \bmu} (p) + \mM_{i} (\bmu)\, \Nc_{\bz, \bmu} (p) + O_{i, \bmu}(p). \label{eq:nf}
\end{equation}
where $O_{i, \mu}: \KK[\bx]\rightarrow \KK[\bz, \mu]^{\mult}$ is linear with image in the commutator ideal $\Cc$.
\end{lemma}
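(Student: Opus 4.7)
The plan is to start from the definition of $\Nc_{\bz,\bmu}$ as a Taylor-type expansion and apply the Leibniz rule to the factor $x_{i}$. Concretely, since $\bpartial^{\bgamma}(x_{i}p) = x_{i}\,\bpartial^{\bgamma}(p) + \gamma_{i}\,\bpartial^{\bgamma-\e_{i}}(p)$ (interpreting the second term as $0$ when $\gamma_{i}=0$), evaluation at $\bx=\bz$ gives $\bpartial_{\bz}^{\bgamma}(x_{i}p) = z_{i}\,\bpartial_{\bz}^{\bgamma}(p) + \gamma_{i}\,\bpartial_{\bz}^{\bgamma-\e_{i}}(p)$. Substituting this into the definition of $\Nc_{\bz,\bmu}(x_{i}p)$ and reindexing the second sum by $\bdelta:=\bgamma-\e_{i}$, I obtain
$$
\Nc_{\bz,\bmu}(x_{i}p) \;=\; z_{i}\,\Nc_{\bz,\bmu}(p) \;+\; \sum_{\bdelta\in\N^{n}} \frac{1}{\bdelta!}\,\bpartial_{\bz}^{\bdelta}(p)\,\mM(\bmu)^{\bdelta+\e_{i}}[1].
$$

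The second main step is to compare the last sum with $\mM_{i}(\bmu)\,\Nc_{\bz,\bmu}(p) = \sum_{\bdelta}\frac{1}{\bdelta!}\,\bpartial_{\bz}^{\bdelta}(p)\,\mM_{i}(\bmu)\,\mM(\bmu)^{\bdelta}[1]$. Their difference equals $\sum_{\bdelta}\frac{1}{\bdelta!}\,\bpartial_{\bz}^{\bdelta}(p)\,\big(\mM(\bmu)^{\bdelta+\e_{i}} - \mM_{i}(\bmu)\,\mM(\bmu)^{\bdelta}\big)[1]$, and I define $O_{i,\bmu}(p)$ to be exactly this vector. Linearity of $O_{i,\bmu}$ in $p$ is immediate, so \eqref{eq:nf} holds once I show that each entry of $O_{i,\bmu}(p)$ lies in the commutator ideal $\Cc$.

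The main (mildly technical) step is therefore to verify that the entries of the matrix $\mM(\bmu)^{\bdelta+\e_{i}} - \mM_{i}(\bmu)\,\mM(\bmu)^{\bdelta}$ lie in $\Cc$. Recall that by our fixed ordering convention, $\mM(\bmu)^{\bdelta+\e_{i}} = \mM_{1}(\bmu)^{\delta_{1}}\cdots \mM_{i}(\bmu)^{\delta_{i}+1}\cdots \mM_{n}(\bmu)^{\delta_{n}}$, while $\mM_{i}(\bmu)\,\mM(\bmu)^{\bdelta} = \mM_{i}(\bmu)\,\mM_{1}(\bmu)^{\delta_{1}}\cdots \mM_{n}(\bmu)^{\delta_{n}}$. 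The passage from the second expression to the first consists in moving the leading $\mM_{i}$ past the prefix $\mM_{1}^{\delta_{1}}\cdots \mM_{i-1}^{\delta_{i-1}}$, one transposition at a time; each such swap replaces a factor $\mM_{i}\mM_{j}$ by $\mM_{j}\mM_{i} + (\mM_{i}\mM_{j}-\mM_{j}\mM_{i})$, and the commutator term introduces a matrix whose entries are generators of $\Cc$, multiplied on the left and right by fixed matrix polynomials in $\bmu$. Iterating this straightforward bookkeeping (induction on $\delta_{1}+\cdots+\delta_{i-1}$) writes $\mM(\bmu)^{\bdelta+\e_{i}} - \mM_{i}(\bmu)\,\mM(\bmu)^{\bdelta}$ as a sum $\sum_{j,k} A_{jk}(\bmu)\,[\mM_{i}(\bmu),\mM_{j}(\bmu)]\,B_{jk}(\bmu)$, whose entries all lie in $\Cc$.

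The hard part is purely notational: keeping the non-commutative bookkeeping honest so that the difference $\mM(\bmu)^{\bdelta+\e_{i}} - \mM_{i}(\bmu)\,\mM(\bmu)^{\bdelta}$ is genuinely expressed in terms of the generators of $\Cc$. Everything else (Leibniz, reindexing, linearity) is routine. Once the membership in $\Cc$ is established, combining the three displays yields the claimed identity \eqref{eq:nf} with $O_{i,\bmu}$ linear and taking values in $\Cc^{\mult}$.
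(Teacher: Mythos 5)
Your proof is correct and follows essentially the same route as the paper: apply the Leibniz rule to $\bpartial^{\bgamma}(x_i p)$, reindex the second sum by $\bgamma-\e_i$, and define $O_{i,\bmu}(p)$ as the resulting discrepancy between $\sum_{\bgamma}\frac{1}{\bgamma!}\bpartial_{\bz}^{\bgamma}(p)\,\mM(\bmu)^{\bgamma+\e_i}[1]$ and $\mM_i(\bmu)\Nc_{\bz,\bmu}(p)$. The only place where you go beyond the paper is in the last step: the paper simply states that $O_{i,\bgamma}(\bmu)=\mM_i(\bmu)\mM(\bmu)^{\bgamma}-\mM(\bmu)^{\bgamma+\e_i}$ is a matrix with coefficients in $\Cc$, whereas you spell out the transposition argument showing that moving $\mM_i$ past the prefix $\mM_1^{\delta_1}\cdots\mM_{i-1}^{\delta_{i-1}}$ produces a sum of terms $A\,[\mM_i,\mM_j]\,B$ whose entries lie in $\Cc$ because $\Cc$ is an ideal containing the entries of each commutator. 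That added detail is welcome but not a different method; everything else matches the paper's computation (modulo the sign convention on $O_{i,\bgamma}$, which is immaterial since the lemma only asserts linearity and membership in $\Cc$).
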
 
\begin{proof}
$\Nc_{\bz, \bmu} (x_{i} p) 
=  
\sum_{\bgamma}
\frac{1}{\bgamma!} \, \partial_{\bz}^{\bgamma} (x_{i} p) \,
  \mM(\bmu)^{\bgamma}[1]
$
  \begin{eqnarray*}
& = &
z_{i} \sum_{\bgamma} \frac{1}{\bgamma!} \partial_{\bz}^{\bgamma} (p) \, \mM({\bmu})^{\bgamma}[1] +
\sum_{\bgamma} 
\frac{1}{\bgamma!} \gamma_{i}\, \partial_{\bz}^{\bgamma-e_{i}} (p) \,
\mM({\mu})^{\bgamma}[1]\\ 
& =&
z_{i} \sum_{\bgamma} \frac{1}{\bgamma!} \partial_{\bz}^{\bgamma} (p)
\, \mM({\bmu})^{\bgamma}[1] 
+ \sum_{\bgamma} \frac{1}{\bgamma!} \partial_{\bz}^{\bgamma} (p) \,
\mM({\bmu})^{\bgamma+e_{i}}[1]\\
& = &
z_{i} \, \Nc_{\bz, \bmu} (p) 
+ \mM_{i}(\bmu) \left ( \sum_{\bgamma} \frac{1}{\bgamma!} \partial_{\bz}^{\bgamma} (p) \,
\mM({\bmu})^{\bgamma}[1] \right) \\
&&~~~~~~+ \sum_{\bgamma} \frac{1}{\bgamma!} \, \partial_{\bz}^{\bgamma} (p) \,
O_{i, \bgamma}({\bmu})[1]  \\
\end{eqnarray*}
where $O_{i, \bgamma}(\mu)= \mM_{i} (\bmu) \mM (\bmu)^{\bgamma}- \mM (\bmu)
^{\bgamma+e_{i}}$ is a $\mult \times \mult$ matrix with coefficients in $\Cc$.
Therefore, $O_{i, \mu}:p\mapsto\sum_{\bgamma} \frac{1}{\bgamma!} \partial_{\bz}^{\bgamma} (p) \,
O_{i,\bgamma}({\bmu})[1]$ is a linear functional of $p$ with coefficients
in $\Cc$.
\end{proof}

The next theorem proves that the system defined as in~(\ref{matrixeq}) for general $\bz$ has $(\bxi, {\nu})$ as a simple root.

\begin{theorem}\label{theorem2}
Let $\KK\subset \C$ be any field, $\fb\in\KK[\bx]^N$, and let $\bxi\in
\C^n$ be an isolated solution of $\fb$.  Let $Q$ be the primary ideal at $\xi$ and assume that $B$ is a basis for $\KK[\bx]/Q$ satisfying the conditions of Lemma \ref{pdlemma}. Let $E\subset\N^n $ be as in (\ref{E}) and $\mM_i(\bmu)$ for $i=1, \ldots n$ be the parametric multiplication matrices corresponding to $E$ as in~(\ref{Mmu})  and~$\Nc_{\bz,\bmu}$ be the parametric normal form as in Defn.~\ref{parnorm}.
Then $(\bz, {\mu})=(\bxi, {\nu})$ is an isolated  root with multiplicity one of the polynomial system in  $\KK[\bz, {\mu}]$:
{
\begin{eqnarray}\label{overdet}
\begin{cases}  
\Nc_{\bz,\bmu} (f_{k}) = 0 \;\text{ for } k=1, \ldots, N,\\
\mM_{i}({\bmu})\cdot \mM_{j}({\bmu})-\mM_{j}({\bmu})\cdot \mM_{i}({\bmu})=0\;
\text{ for } i, j=1, \ldots, n.  \end{cases}
\end{eqnarray}}
\end{theorem}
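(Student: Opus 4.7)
The plan is to first confirm that $(\bxi,\nu)$ is a solution, then establish both isolation and simplicity by showing that the Jacobian of the system at this point has full column rank $n+n\mult(\mult-1)/2$. That $(\bxi,\nu)$ is a root is immediate: $\Nc_{\bxi,\nu}(f_{k}) = [\Lambda_{0}(f_{k}),\dots,\Lambda_{\mult-1}(f_{k})]^{t} = 0$ because each $\Lambda_{j}\in\DDD\subset I^{\perp}$ and $f_{k}\in I$, while the $\mM_{i}(\nu)$ are the genuine, pairwise commuting multiplication matrices on $\K[\bx]/Q$. Let $\m=\m_{(\bxi,\nu)}$ be the maximal ideal of $\K[\bz,\bmu]$ at $(\bxi,\nu)$; its cotangent space splits as a direct sum of a $\bz$-part (dimension $n$) and a $\bmu$-part (dimension $n\mult(\mult-1)/2$), and the Jacobian has full column rank exactly when the images of the defining equations span $\m/\m^{2}$.

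Specializing $\bz=\bxi$ recovers precisely the system of Theorem~\ref{theorem1}, whose ideal in $\K[\bmu]$ equals the full maximal ideal $\m_{\nu}$, so these equations already span the $\bmu$-part of $\m/\m^{2}$. For the $\bz$-part, the identity $\partial_{z_{i}}\Nc_{\bz,\bmu}(p) = \Nc_{\bz,\bmu}(\partial_{i}p)$ (obtained by differentiating under the sum in Definition~\ref{parnorm}) yields the first-order Taylor expansion
\[
\Nc_{\bz,\bmu}(f_{k}) \equiv \Nc_{\bxi,\bmu}(f_{k}) + \sum_{i=1}^{n}(z_{i}-\xi_{i})\,\Nc_{\bxi,\nu}(\partial_{i}f_{k}) \pmod{\m^{2}}.
\]
The first summand already lies in the $\bmu$-span described above, so completing the spanning of $\m/\m^{2}$ reduces to showing that the $N\mult\times n$ matrix with entries $\Lambda_{j}(\partial_{i}f_{k})$ has rank $n$.

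This rank statement is the main technical step. Suppose $\sum_{i}\lambda_{i}\Lambda_{j}(\partial_{i}f_{k})=0$ for every $j$ and $k$, and set $\partial_{\lambda} = \sum_{i}\lambda_{i}\partial_{i}$. Since $\{\Lambda_{j}\}$ is a basis of $\DDD = Q^{\perp}\cap\K[\bpartial_{\xi}]$, this forces $\partial_{\lambda}f_{k}\in Q$ for each $k$, and because $\fb$ generates $I$, one obtains $\partial_{\lambda}I\subset Q$. Using the elementary identity $\Lambda(\partial_{i}p) = (\partial_{i,\xi}\Lambda)(p)$, where $\partial_{i,\xi}\Lambda$ denotes the product inside $\K[\bpartial_{\xi}]$, it follows that for every $\Lambda\in\DDD$ the element $\sum_{i}\lambda_{i}\partial_{i,\xi}\Lambda$ annihilates every $f_{k}$ and therefore lies in $I^{\perp}\cap\K[\bpartial_{\xi}] = \DDD$. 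Pick, however, any $\Lambda^{*}\in\DDD$ of maximal order $\nil$ with some leading monomial $\bpartial_{\xi}^{\alpha^{*}}$: the leading part of $\sum_{i}\lambda_{i}\partial_{i,\xi}\Lambda^{*}$ is the combination $\sum_{i}\lambda_{i}\bpartial_{\xi}^{\alpha^{*}+\e_{i}}$ of distinct monomials of order $\nil+1$, which is nonzero whenever $\lambda\neq 0$, contradicting membership in $\DDD$. Hence $\lambda=0$, the Jacobian has full column rank, and $(\bxi,\nu)$ is an isolated simple root of~\eqref{overdet}. The principal obstacle is exactly this last argument: Theorem~\ref{theorem1} controls only the $\bmu$-directions ``for free'', and disposing of the $\bz$-directions requires the essentially new input that $\fb$ generates the full ideal $I$ (not merely $Q$), combined with the strictly order-raising nature of multiplication by $\partial_{i,\xi}$ in $\K[\bpartial_{\xi}]$.
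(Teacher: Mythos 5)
The heart of the issue is the reduction in your second paragraph: you observe that the $\bmu$-part of the cotangent space is spanned (via Theorem~\ref{theorem1}) and then claim that ``completing the spanning of $\m/\m^2$ reduces to showing that the $N\mult\times n$ matrix with entries $\Lambda_j(\partial_i f_k)$ has rank $n$.'' This reduction is not valid. Write the Jacobian of~\eqref{overdet} at $(\bxi,\nu)$ in block form as $J=\bigl[\,J_{\bz}\;\big|\;J_{\bmu}\,\bigr]$. Theorem~\ref{theorem1} gives that $J_{\bmu}$ has full column rank $D=n\mult(\mult-1)/2$, and your rank lemma gives that $J_{\bz}$ has full column rank $n$; but two blocks each of full column rank do not yield a matrix of full column rank (already $[\,1\;|\;1\,]$ is a counterexample). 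The commutator rows of $J_{\bz}$ vanish, so what is actually needed is that for a nonzero $\lambda\in\K^n$ the vector $J_{\bz}\lambda$ never lies in the image of $J_{\bmu}$ restricted to the kernel of the commutator Jacobian. In kernel language: if $(\lambda,a)\in\ker J$, the $\Nc$-rows give $\sum_i\lambda_i\Lambda_j(\partial_i f_k)=-\sum_{(\alpha,\beta)}a_{\alpha,\beta}\,\partial_{\mu_{\alpha,\beta}}[\Nc_{\bz,\bmu}(f_k)]_j\big|_{(\bxi,\nu)}$, not $\sum_i\lambda_i\Lambda_j(\partial_i f_k)=0$. So you are never entitled to the hypothesis ``$\Lambda_j(\partial_\lambda f_k)=0$ for all $j,k$'' on which your final argument rests; the $\bz$-directions and $\bmu$-directions are genuinely coupled.

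The paper's proof handles exactly this coupling. From $\Delta(\Nc_{\bz,\bmu}(f_k))=0$ (where $\Delta$ encodes the putative kernel vector) it extracts a single functional $\tilde\Lambda\in\K[\bpartial_\xi]$ of the form $\partial_\lambda\cdot\Lambda_{\mult-1}+\Gamma$ with $\ord(\Gamma)\leq\nil$, which vanishes on each $f_k$; the $\Gamma$-part is precisely the $\bmu$-derivative contribution that your argument drops. Showing that $\tilde\Lambda$ annihilates not only the generators but all of $I$ then requires the inductive step via Lemma~\ref{commlemma} (using that $\Delta$ kills the commutator ideal $\Cc$), after which $\tilde\Lambda\in I^\perp\cap\K[\bpartial_\xi]=Q^\perp$ of order $\nil+1$ gives the contradiction. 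Your rank lemma is itself a correct and pleasant observation --- given $\partial_\lambda f_k\in Q$ for all $k$, one gets $\partial_\lambda I\subset Q$ and hence $\partial_\lambda\cdot\Lambda\in I^\perp\cap\K[\bpartial_\xi]=\DDD$ for all $\Lambda\in\DDD$, contradicting $\ord(\partial_\lambda\cdot\Lambda^*)=\nil+1$ --- but it answers a weaker question than the theorem needs. (Two small slips in that lemma, both repairable: ``annihilates every $f_k$'' is not enough to conclude membership in $I^\perp$ --- you need, and indeed have, $\partial_\lambda I\subset Q$ so that $(\partial_\lambda\cdot\Lambda)(I)=\Lambda(\partial_\lambda I)=0$; and the ``leading monomial'' step should instead use that the degree-$(\nil+1)$ homogeneous part of $\partial_\lambda\cdot\Lambda^*$ is $\partial_\lambda$ times the degree-$\nil$ part of $\Lambda^*$, nonzero because $\K[\bpartial_\xi]$ is a domain.) To repair the proof you would need to bring the $\bmu$-derivative contribution $\Gamma$ into the functional before invoking the order/duality argument, at which point you essentially reconstruct the paper's proof, including its use of the commutator ideal.
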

\begin{proof}
  For simplicity, let us denote the (non-zero) polynomials appearing in (\ref{overdet}) by 
$$P_1, \ldots, P_M\in  \KK[\bz, {\bmu}],$$ 
where $M\leq N\mult+n(n-1)(\mult-1)(\mult-2)/4$. To prove the theorem, it is sufficient to prove that the columns of the Jacobian matrix of the system $[P_1, \ldots, P_M]$ at $(\bz, {\bmu})=(\bxi, {\nu})$ are linearly independent. The columns of this Jacobian matrix correspond to the elements in $\K[\bz, {\bmu}]^*$ 
$$\partial_{1, \xi}, \ldots, \partial_{n, \xi}, \text{ and } \partial_{\mu_{\balpha, \bbeta}}\;\text{ for } \; (\balpha, \bbeta) \in E\times \partial(E),
$$ 
where $\partial_{i, \xi}$ is defined in (\ref{partial}) for  $\bz$ replacing $\bx$, and $\partial_{\bmu_{\balpha, \bbeta}}$ is defined by
$$
\partial_{{\bmu_{\balpha, \bbeta}}}(q) = \frac{d q}{ d \bmu_{\balpha, \bbeta}}\left|_{(\bz, {\mu})=(\bxi, {\nu})} \right. \quad \text{ for } q\in \K[\bz, {\mu}]. 
$$
Suppose there exist $a_1, \ldots, a_n,$ and $a_{\balpha, \bbeta}\in \C$ for  $(\balpha,\bbeta) \in  E\times \partial(E)$ not all zero 
such that 
$$
\Delta:= a_1\partial_{1, \xi}+ \cdots + a_n\partial_{n, \xi}+\sum_{\balpha, \bbeta} a_{\balpha, \bbeta} \partial_{\mu_{\balpha, \bbeta}}\in \K[\bz, {\bmu}]^*
$$
vanishes on all polynomials $P_1, \ldots, P_M$ in (\ref{overdet}).  In
particular, for an element $P_{i} (\bmu)$ corresponding to the commutation
relations and any polynomial $Q  \in \K[\bx, \mu]$, using the product rule for the linear differential operator $\Delta$ we get  
$$
\Delta (P_{i} Q)= \Delta (P_{i}) Q (\bxi,\bnu) + P_{i} (\bnu) \Delta (Q) = 0
$$
since $ \Delta (P_{i}) =0$ and $P_{i} (\bnu)=0$. By the linearity of $\Delta$, for any
polynomial $C$ in the commutator ideal $ \Cc$ defined in Defn. \ref{commutator}, we have $\Delta (C)=0$.

Furthermore, since $\Delta(\Nc_{\bz, \bmu}(f_k))=0$ and by $$\Nc_{\bxi, \bnu} (f_k) = [\Lambda_{0}(f_k), \ldots, \Lambda_{{\mult-1}}(f_k)]^t,$$ we get that 
{\small \begin{equation}\label{dualeq}
 (a_1\partial_{1, \xi}+ \cdots+a_n\partial_{n, \xi})\cdot \Lambda_{{\mult-1}}(f_k)+ \sum_{|\bgamma|\leq |\balpha_{\mult-1}|}p_{\bgamma}({\nu}) \; \bpartial^{\bgamma}_{\xi}(f_k)=0
\end{equation}}
where $p_{\bgamma}\in \K[{\mu}]$ are  some polynomials in the
variables $\mu$ that do not depend on $f_k$. 
If $a_{1}, \ldots, a_{n}$ are not all zero, we have an element $\tilde{\Lambda}$ of $\K[\bpartial_\bxi]$ of order strictly greater than
${\rm ord} (\Lambda_{{\mult-1}})=\nil$ that vanishes on $f_1, \ldots, f_N$. 

Let us prove that this higher order differential also vanishes on all multiples of  $f_k$ for
$k=1, \ldots, N$.
Let $p\in \K[\bx]$ such that $\Nc_{\bxi,\bnu} (p)=0$, $\Delta
(\Nc_{\bz,\bmu} (p))=0$. Since the multiplication matrices commute at $\mu=\nu$, we have by Lemma \ref{commutator}
\begin{eqnarray*}
{\Nc_{\bxi,\bnu} ((x_{i}-\xi_{i}) p)}= 
(x_{i}-\xi_{i}) \Nc_{\bxi,\bnu} (p) + 
\mM_{i} (\nu) \Nc_{\bxi,\bnu} (p)  =  0
\end{eqnarray*}
and  by~\eqref{eq:nf} we have
\begin{eqnarray*}
  {\Delta (\Nc_{\bz, \bmu} ((x_{i}-\xi_{i}) p))}
&= &
\Delta ((x_{i}-\xi_{i}) \Nc_{\bz, \bmu} (p)) + 
\Delta (\mM_{i} (\mu) \Nc_{\bz, \bmu} (p)) + \Delta( O_{\mu} (p))\\
 & = &
\Delta (x_{i}-\xi_{i}) \Nc_{\bxi, \bnu} (p) + (\xi_{i}-\xi_{i})\Delta( \Nc_{\bz, \bmu} (p)) \\
&& ~~~~ + \Delta (\mM_{i} (\mu))  \Nc_{\bxi, \bmu} (p) + 
\mM_{i} (\nu)  \Delta (\Nc_{\bz, \bmu} (p)) \\
&& ~~~~ +  \Delta (O_{i,\bmu} (p))\\
& = & 0.
\end{eqnarray*}
As $\Nc_{\bxi,\bnu} (f_{k})=0$, $\Delta (\Nc_{\bz,\bmu} (f_{k}))=0$,
$i=1,\ldots, N$, we
deduce by induction on the degree of the multipliers and by linearity that for any
element $f$ in the ideal $I$ generated by $f_{1}, \ldots, f_{N}$, we
have 
$$ 
\Nc_{\bxi,\bnu} (f)=0 \hbox{~~~and~~~} \Delta (\Nc_{\bz,\bmu} (f))=0,
$$
which yields $\tilde{\Lambda} \in I^{\bot}$. Thus we have
$\tilde{\Lambda} \in I^{\bot}\cap \K[\bpartial_\bxi]= Q^{\bot}$ (by Lemma
\ref{lem:primcomp}).
As there is no element of degree strictly bigger than $\nil$ in
$Q^{\bot}$, this implies that 
$$a_1=\cdots=a_n=0.$$
Then, by specialization at $\bx=\bxi$, $\Delta$ yields an element of the kernel
of the Jacobian matrix of the system \eqref{matrixeq}.
By Theorem \ref{theorem1}, this Jacobian has a zero-kernel, since it defines
the simple point $\nu$. We deduce that $\Delta=0$ and $(\bxi,\bnu)$ is
an isolated and simple root of the system  \eqref{overdet}.
\end{proof}

The following corollary applies the polynomial system defined in (\ref{overdet}) to refine the precision of an approximate multiple root together with the coefficients of its Macaulay dual basis. The advantage of using this, as opposed to using the Macaulay multiplicity matrix, is that the number of variables is much smaller, as was noted above.  

\begin{corollary} Let $\fb\in \KK[\bx]^N$ and $\bxi\in \C^n$ be as above, and let $\Lambda_{0}({\nu}), \ldots, \Lambda_{{\mult-1}}({\nu})$ be its dual basis as in~(\ref{Macbasis}).  Let $E\subset \N^n$ be as above. Assume that we are given  approximates for the singular roots and its inverse system as in (\ref{Macbasis})
$$
\tilde{\bxi} \cong \bxi \; \text{ and } \; \tilde{\nu}_{\alpha_i, \bbeta}\cong \nu_{\alpha_i, \bbeta} \;\;\forall \balpha_i \in E,\; \beta\not\in E, \;|\bbeta|\leq \nil.
$$
Consider the overdetermined system in $\KK[\bz, \mu]$ from (\ref{overdet}).
Then a square system of  random linear combinations  of the polynomials in (\ref{overdet}) will have 
a simple root at $\bz=\bxi$, $\mu=\nu$ with high probability. Thus,  we can apply Newton's method for this square system to refine $\tilde{\bxi}$ and $\tilde{\nu}_{\alpha_i, \bbeta}$ for $(\balpha_i, \bbeta)\in E\times \partial(E)$. For   $\tilde{\nu}_{\alpha_i, \bgamma}$ with $\bgamma\not \in E^+$ we can use (\ref{restnu}) for the update.  
\end{corollary}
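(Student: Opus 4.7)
The plan is to reduce the corollary to a standard consequence of Theorem~\ref{theorem2} together with a generic rank argument for random linear combinations, followed by the classical quadratic convergence of Newton's iteration at simple roots. Let $k=n+|E\times \partial(E)|$ denote the number of unknowns $(\bz,\bmu)$, and let $P_1,\ldots,P_M$ be the (non-zero) polynomials appearing in the system \eqref{overdet}. By Theorem~\ref{theorem2}, $(\bxi,\bnu)$ is an isolated solution of $P_1=\cdots=P_M=0$ of multiplicity one, so the Jacobian matrix $J:=[\partial P_i/\partial(\bz,\bmu)]_{(\bxi,\bnu)}$ is an $M\times k$ matrix of full column rank~$k$.

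Next I would form the square subsystem by choosing a random $k\times M$ matrix $A$ (with entries drawn, say, from a continuous distribution on $\C$ or from a sufficiently large finite subset of $\KK$) and taking $G := A\,[P_1,\ldots,P_M]^{t}$. The Jacobian of $G$ at $(\bxi,\bnu)$ equals $A\,J$, which is $k\times k$. Since $J$ has full column rank, the function $A\mapsto \det(A\,J)$ is a non-zero polynomial in the entries of $A$; hence its zero locus is a proper Zariski-closed subset of the space of $k\times M$ matrices. A random choice of $A$ therefore lies outside this locus with probability one (or, for entries sampled from a finite set of size~$s$, with probability at least $1-\deg(\det)/s$ by the Schwartz--Zippel lemma). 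This shows that with high probability the square system $G=0$ has $(\bxi,\bnu)$ as an isolated root of multiplicity one.

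Once this simple-root property is established, the classical Newton--Kantorovich theory applies: Newton's iteration for the square system $G=0$, started at $(\tilde{\bxi},\tilde{\bnu})$, converges quadratically to $(\bxi,\bnu)$ provided the initial approximations are within the basin of attraction of the simple root. The last sentence of the statement is then immediate: the components of $\bnu$ indexed by $E\times\partial(E)$ are refined directly by the iteration, while any remaining coefficients $\tilde{\nu}_{\balpha_i,\bgamma}$ with $\bgamma\notin E^+$ are recovered by evaluating the explicit polynomial identity \eqref{restnu} in the refined parameters.

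The only non-routine step is the genericity argument for $A$, but it is essentially one line given that Theorem~\ref{theorem2} already guarantees the column rank of $J$; no subtler obstacle seems to appear. One could refine the statement by quantifying ``high probability'' via Schwartz--Zippel in terms of the degree of $\det(A\,J)$ as a polynomial in the entries of $A$, which equals $k$, but this level of detail seems unnecessary for a corollary aimed at a practical numerical refinement scheme.
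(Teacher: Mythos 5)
Your proposal is correct and supplies exactly the argument the paper leaves implicit (the corollary appears without a written proof, as an immediate consequence of Theorem~\ref{theorem2}). The three ingredients you use --- full column rank of the Jacobian at $(\bxi,\bnu)$ from the simple-root conclusion of Theorem~\ref{theorem2}, the genericity (Schwartz--Zippel) argument showing that a random rectangular matrix $A$ keeps $\det(A\,J)$ nonzero, and the classical quadratic local convergence of Newton's method at a simple root of a square system --- are precisely what one needs, and you have assembled them correctly; the final observation that the remaining coefficients $\tilde{\nu}_{\balpha_i,\bgamma}$ with $\bgamma\notin E^+$ are recovered deterministically from~\eqref{restnu} is also right. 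The only cosmetic caveat is that the count $k$ of unknowns should be $n+D$ with $D$ the actual number of free $\mu$-indeterminates in Definition~\ref{E-defl}, which can be slightly smaller than $|E\times\partial(E)|$ because of the nilpotent/triangular ordering constraint, but this does not affect the rank argument in any way.
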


\begin{example}\label{Ex:Illustrative2}
Reconsider the setup from Ex.~\ref{Ex:Illustrative} with primal
basis $\{1,x_2\}$ and $E = \{(0,0),(0,1)\}$.  We obtain
$$\mM_1^{t}(\mu) = \left[\begin{array}{cc} 0 &  \mu \\ 0 & 0 \end{array}\right]~~\hbox{and}~~
\mM_2^{t}(\mu) = \left[\begin{array}{cc} 0 & 1 \\ 0  & 0 \end{array}\right].$$
The resulting deflated system in (\ref{overdet}) is
{\small
$$F(z_1,z_2,\mu) = \left[\begin{array}{c} z_1 + z_2^2 \\
\mu + 2 z_2 \\ z_1^2 + z_2^2 \\ 2 \mu z_1 + 2 z_2 \end{array}\right]$$
}
which has a nonsingular root at $(z_1,z_2,\mu) = (0,0,0)$ corresponding
to the origin with multiplicity structure $\{1,\partial_{2}\}$.
\end{example}
We remark that, even if $E$ does not correspond to an
orthogonal primal-dual basis, it can define an isolated root. 
The deflation system will have an isolated simple solution as soon as
the parametric multiplication matrices are upper-triangular and nilpotent.
This is illustrated in the following example:
\begin{example}
We consider the system: $f_1= x_1 - x_2 + x_1^2$, $f_2= x_1
- x_2 + x_2^2$ of Example \ref{ex:mult3}. The point $(0,0)$ is a root
of multiplicity $3$. We take $B=\{1, x_{1},x_{2}\}$, which does not
correspond to a primal basis of an orthogonal primal-dual pair.
The parametric multiplication matrices are:
$$ 
M_{1}^{t}(\mu) =  \left[ \begin {array}{ccc} 0&1&0\\0&0&\mu_1 \\
                           0&0&0\end {array} \right],\ 
M_{2}^{t}(\mu) =  \left[ \begin {array}{ccc} 0&\mu_{2}&1\\ 0&0&\mu_{{3}}\\ 0&0&0\end {array} \right] 
$$
The extended system is generated by the commutation relations 
 $M_{1}M_{2}-M_{2}M_{1}=0$, which give the polynomial
 $\mu_{{1}}\mu_{{2}}-\mu_{{3}},$
 and the normal form relations:
\begin{itemize}
 \item $\Nc(f_{1})=0$ gives the polynomials $x_{{1}}-x_{{2}}+{x_{{1}}}^{2},\ 1+2\,x_{{1}}-\mu_{{2}},\ -1+\mu_{{1}},$
 \item $\Nc(f_{2})=0$ gives the polynomials  $x_{{1}}-x_{{2}}+{x_{{2}}}^{2},\ 1+ \left( -1+2\,x_{{2}} \right) \mu_{{2
}},\ -1+2\,x_{{2}}+\mu_{{2}}\mu_{{3}}$
\end{itemize}
To illustrate numerically that this extended system in the variables
$(x_{1},x_{2},\mu_{1},\mu_{2},\mu_{3})$ defines a simple
root, we apply Newton iteration on it starting from a point close to
the multiple solution $(0,0)$ and its inverse system:
{\small\begin{center}
\begin{tabular}{ll}
  Iteration & $[x_{1},x_{2},\mu_{1},\mu_{2},\mu_{3}]$ \\
  0 & {$[ 0.1, 0.12, 1.1, 1.25, 1.72]$}\\
  1 & {$[ 0.0297431315,  0.0351989647, 0.9975178694, 1.0480778978, 1.0227973199]$}\\
  2 & {$[ 0.0005578682,  0.0008806394, 0.9999134370, 0.9997438194, 0.9996904740]$}\\
  3 & {$[ 0.0000001981,-0.0000001864, 0.9999999998, 1.0000002375, 1.0000002150]$}\\
  4 & {$[{ 2.084095775\, 10^{-14}},-{ 1.9808984139\, 10^{-14}},  1.0, 1.0000000000, 1.0000000000]$}
\end{tabular}
\end{center}}
As expected, we observe the quadratic convergence to the simple
solution $(\bxi,\nu)$ corresponding to the point $(0,0)$ and the
dual basis
$$\left\{1, \partial_{1}+\nu_{2}\partial_{2}, {\partial_2} + {1\over
    2}\nu_{1}\partial_1^2+\nu_{3}\partial_1\partial_2+{1\over
    2}\nu_{2}\nu_{3}\partial_2^2\right\}$$ 
with $\nu_{1}=1,\nu_{2}=1,\nu_{3}=1$.
\end{example}

\subsection{Deflation ideals}\label{DeflIdeal}

In this section we study a similar approach as in \cite{Leykin2008}, where a so called {\em deflation ideal} $I^{(d)}$ is defined for an arbitrary ideal $I\subset\KK[\bx]$ and $d\geq 0$. Here we define a modification of the construction of \cite{Leykin2008}, based on our construction in Theorem \ref{theorem2}, which we call the {\em $E$-deflation ideal}.

\begin{definition} \label{E-defl}Let $\fb=(f_1, \ldots, f_N)\in \KK[\bx]^N$ and $I=\langle  f_1, \ldots, f_N\rangle$. Let 
  $$
  E=\{\alpha_0, \ldots, \alpha_{\delta-1}\}\subset\N^n
$$
be a set of $\delta$ exponent vectors  {\em stable under subtraction}, i.e., if $\alpha, \beta\in \N^n$ and $\beta\leq \alpha$ componentwise,  then $\alpha\in E$ implies $\beta\in E$.  We also assume that $\alpha_0=0$ and 
$$|\alpha_0|\leq\cdots \leq |\alpha_{\delta-1}|.
$$Let 
$$\mu:=(\mu_{\alpha_i, \alpha_k+\e_j}\;:\; \alpha_i, \alpha_k\in E, j=1, \ldots, n,|\alpha_i|\geq |\alpha_k|+1, \; \alpha_k+\e_j\not\in E)
$$
be new indeterminates of cardinality $D\leq n\delta(\delta-1)/2$. Let $\mM_i(\mu)$ for $i=1, \ldots, n$  be the parametric multiplication matrices corresponding to $E$ defined in (\ref{Mmu}). Then we define the {\em $E$-deflated ideal} $I^{(E)}\subset\KK[\bx,\mu]$ as
$$I^{(E)}:=\left( \Nc_{\bx,\bmu} (f_{k})  \;:\; k=1, \ldots, N\right)+\left( 
\mM_{i}({\bmu})\cdot \mM_{j}({\bmu})-\mM_{j}({\bmu})\cdot \mM_{i}({\bmu})\;
:\; i, j=1, \ldots, n\right).
$$
Here $ \Nc_{\bx,\bmu}$ is the parametric normal form defined in Defn. \ref{parnorm} for $\bz=\bx$.
\end{definition}

First we prove that the $E$-deflation ideal  does not depend on the choice of the generators of $I$.

\begin{proposition}
Let $I\subset \KK[\bx]$ and $E\subset\N^n$ be as above. Then, the $E$-deflation ideal $I^{(E)}$ does not depend on the generators $f_1, \ldots , f_N$ of $I$.
\end{proposition}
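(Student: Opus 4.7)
The plan is to show that for every $g\in I$, the entries of $\Nc_{\bx,\bmu}(g)$ already lie in the ideal
$$
J_\fb := \bigl(\Nc_{\bx,\bmu}(f_k):k=1,\ldots,N\bigr)+\Cc
$$
associated to the chosen generating set $f_1,\ldots,f_N$, where $\Cc\subset\KK[\bx,\bmu]$ is the commutator ideal of Definition \ref{commutator}. Once this is established, a symmetric argument applied to a second generating family $g_1,\ldots,g_M$ of $I$ gives $J_\fb=J_\gb$, proving independence.

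The key tool is Lemma \ref{commlemma} with $\bz=\bx$: for every $p\in\KK[\bx]$ and every $i$,
$$
\Nc_{\bx,\bmu}(x_i p)\;\equiv\;\bigl(x_i+\mM_i(\bmu)\bigr)\,\Nc_{\bx,\bmu}(p)\pmod{\Cc},
$$
because the term $O_{i,\bmu}(p)$ has entries in $\Cc$. Modulo $\Cc$ the matrices $\mM_1(\bmu),\ldots,\mM_n(\bmu)$ commute pairwise (and of course commute with the scalars $x_j$), so iterating the identity above over the exponents of a monomial $\bx^{\balpha}$ is unambiguous and yields, by induction on $|\balpha|$,
$$
\Nc_{\bx,\bmu}(\bx^{\balpha}p)\;\equiv\;(\bx+\mM(\bmu))^{\balpha}\,\Nc_{\bx,\bmu}(p)\pmod{\Cc}.
$$
By $\KK$-linearity of $\Nc_{\bx,\bmu}$ this extends to
$$
\Nc_{\bx,\bmu}(h\,p)\;\equiv\;h(\bx+\mM(\bmu))\,\Nc_{\bx,\bmu}(p)\pmod{\Cc}
$$
for every $h\in\KK[\bx]$, where $h(\bx+\mM(\bmu))$ is a $\mult\times\mult$ matrix with entries in $\KK[\bx,\bmu]$, well defined modulo $\Cc$.

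Now take an arbitrary $g\in I$ and write $g=\sum_{k=1}^N h_k f_k$ with $h_k\in\KK[\bx]$. Applying $\Nc_{\bx,\bmu}$ and using the displayed congruence with $p=f_k$, we obtain
$$
\Nc_{\bx,\bmu}(g)\;\equiv\;\sum_{k=1}^N h_k(\bx+\mM(\bmu))\,\Nc_{\bx,\bmu}(f_k)\pmod{\Cc}.
$$
Each summand on the right is a $\KK[\bx,\bmu]$-linear combination of the entries of $\Nc_{\bx,\bmu}(f_k)$, so every entry of $\Nc_{\bx,\bmu}(g)$ belongs to $J_\fb$. Swapping the roles of the two generating sets gives the opposite inclusion and hence $I^{(E)}$ is intrinsic to $I$ and $E$.

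The only real content is the passage from the single-variable identity of Lemma \ref{commlemma} to the multi-variable statement $\Nc_{\bx,\bmu}(hp)\equiv h(\bx+\mM(\bmu))\Nc_{\bx,\bmu}(p)\pmod{\Cc}$; the potential obstacle is the non-commutativity of the $\mM_i(\bmu)$, but this is precisely what working modulo $\Cc$ removes, so the induction on $|\balpha|$ goes through without incident.
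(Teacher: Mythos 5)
Your proof is correct and follows essentially the same path as the paper's: both invoke Lemma \ref{commlemma} to show that applying $\Nc_{\bx,\bmu}$ to $x_i p$ stays (modulo the commutator ideal $\Cc$) inside the ideal generated by the entries of $\Nc_{\bx,\bmu}(p)$, then iterate by induction on the degree of the monomial multiplier and conclude by linearity and symmetry. Your version is marginally more explicit — packaging the induction into the congruence $\Nc_{\bx,\bmu}(hp)\equiv h(\bx+\mM(\bmu))\Nc_{\bx,\bmu}(p)\pmod{\Cc}$ and spelling out the symmetric inclusion — but there is no substantive difference in method.
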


\begin{proof}
By Lemma \ref{commlemma},  we have
\begin{equation*}
\Nc_{\bx,\bmu} ( x_{i} p) = x_{i} \Nc_{\bx, \bmu} (p) + \mM_{i} (\bmu)\, \Nc_{\bx, \bmu} (p) + O_{i, \bmu}(p),
\end{equation*}
where $O_{i, \bmu}(p)$ is a vector of polynomials in the commutator ideal $\Cc$ as in Defn.~\ref{commutator}. Thus, if $ \Nc_{\bx, \bmu} (p) \in I^{(E)}$ then $\Nc_{\bx,\bmu} ( x_{i} p)\in I^{(E)}$. Using induction on the degree of $\bx^\alpha$, we can  show that $ \Nc_{\bx, \bmu} (p) \in I^{(E)}$ implies that  $\Nc_{\bx,\bmu} ( \bx^\alpha p)\in I^{(E)}$. Using that  $\Nc_{\bx,\bmu}$ is linear, we get $\Nc_{\bx,\bmu} ( I)\subset I^{(E)}$. 
\end{proof}

Next, we prove the converse of Theorem \ref{theorem2}, namely that
isolated simple roots of $I^{(E)}$ correspond to  multiple
roots of $I$ with multiplicity structure corresponding to $E$, at least up to the order of $E$. 

\begin{theorem}\label{theorem3}
Let $I=\langle f_1, \ldots, f_N\rangle \subset\KK[\bx]$ and
$E=\{\alpha_0, \ldots, \alpha_{\delta-1}\}\subset\N^n$ be as in
Definition \ref{E-defl} and let $o= |\alpha_{\delta-1}|$. Let $(\xi,
\nu)\in \C^{n+D}$ be an isolated solution of the $E$-deflated
ideal $I^{(E)}\subset \KK[\bx,\mu]$. Then $\xi$ is a root of $I$, and
$(\xi, \nu)$ uniquely determines an orthogonal pair of primal-dual
bases $B$ and $\bLambda$.
They satisfy the conditions of Lemma \ref{pdlemma}
for $\C[x]/Q$ and its dual, respectively,
where $Q=  I_{\xi} + \m_{\xi}^{o+1}$ with $I_{\xi}$ the
intersection of the primary components of $I$ contained in $\m_{\xi}$.
\end{theorem}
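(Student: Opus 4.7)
The plan is to extract the candidate primal basis $B=\{(\bx-\bxi)^{\balpha_0},\ldots,(\bx-\bxi)^{\balpha_{\delta-1}}\}$ and dual family $\bLambda=\{\Lambda_0,\ldots,\Lambda_{\delta-1}\}$ directly from the parametric normal form at $(\bxi,\bnu)$, then verify in turn that $\bxi\in V(I)$, that $(B,\bLambda)$ is orthogonal with $\bLambda$ of the shape \eqref{Macbasis}, that each $\Lambda_k\in Q^{\perp}$, and finally that these two families are bases of $\C[\bx]/Q$ and $Q^{\perp}$. Isolatedness of $(\bxi,\bnu)$ will be essential only in the last step.

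\smallskip

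Since each $\mM_i^t(\bnu)$ is strictly upper triangular, $\mM_i(\bnu)$ has zero first row, so $\mM(\bnu)^{\bgamma}[1]$ has vanishing first entry whenever $\bgamma\neq 0$; reading off the first component of $\Nc_{\bxi,\bnu}(f_k)=0$ yields $f_k(\bxi)=0$. The commutator generators of $I^{(E)}$ vanish at $\bnu$, so the $\mM_i(\bnu)$ pairwise commute. Define $\Lambda_k(p):=[\Nc_{\bxi,\bnu}(p)]_{k+1}$. The defining rule for $\bmu$ forces $\mM_i(\bnu)\,\e_\ell\in\mathrm{span}\{\e_p:|\balpha_p|\geq|\balpha_\ell|+1\}$, so $\mM(\bnu)^{\bgamma}[1]$ is supported on indices $k$ with $|\balpha_k|\geq|\bgamma|$, giving $\ord(\Lambda_k)\leq|\balpha_k|\leq o$. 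A short induction on $|\balpha_k|$ using stability of $E$ under subtraction and commutativity yields $\mM(\bnu)^{\balpha_k}[1]=\e_k$, which both delivers the orthogonality $\Lambda_k((\bx-\bxi)^{\balpha_j})=\delta_{jk}$ and pins down the shape of $\Lambda_k$ as in \eqref{Macbasis}, with $\nu_{\balpha_k,\bbeta}:=[\mM(\bnu)^{\bbeta}[1]]_{k+1}$ for $\bbeta\notin E$, $|\bbeta|\leq|\balpha_k|$.

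\smallskip

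To obtain $\Lambda_k\in Q^{\perp}$, I would invoke Lemma~\ref{commlemma} at $\bmu=\bnu$: the commutator ideal $\Cc$ vanishes, and iterating $\Nc_{\bxi,\bnu}(x_ip)=(\xi_i+\mM_i(\bnu))\Nc_{\bxi,\bnu}(p)$ shows that $\Nc_{\bxi,\bnu}(pf_l)=0$ for all $p$, so $\Lambda_k\in I^{\perp}$. To lift to $I_\xi^{\perp}$, given $f\in I_\xi$ pick $g$ with $g(\bxi)\neq 0$ and $gf\in I$; then $g$ is a unit mod $\m_{\bxi}^{o+1}$, so $f\equiv hgf\pmod{\m_{\bxi}^{o+1}}$ for some $h$. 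Since $\Nc_{\bxi,\bnu}(\m_{\bxi}^{o+1})=0$ (because $\mM(\bnu)^{\bgamma}[1]=0$ for $|\bgamma|>o$ by the support bound) and $\Nc_{\bxi,\bnu}(hgf)=0$ (from $gf\in I$), we get $\Nc_{\bxi,\bnu}(f)=0$. Combined with $\ord(\Lambda_k)\leq o$ this gives $\Lambda_k\in(I_\xi+\m_{\bxi}^{o+1})^{\perp}=Q^{\perp}$.

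\smallskip

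The most delicate step is showing that $\bLambda$ spans $Q^{\perp}$, equivalently that $\dim_{\C}\C[\bx]/Q=\delta$. Setting $\Phi(p):=\Nc_{\bxi,\bnu}(p)$, the multiplicativity $\Phi(pq)=p(\bxi+\mM(\bnu))\Phi(q)$ makes $K:=\ker\Phi$ an ideal; the previous paragraph gives $Q\subset K$, while $\Phi((\bx-\bxi)^{\balpha_k})=\e_k$ makes $\Phi$ surjective, so $\dim\C[\bx]/K=\delta$ and $\dim\C[\bx]/Q\geq\delta$. The reverse inequality is where isolatedness enters: the expected argument is that a $q\in K\setminus Q$ would, by dualizing the Jacobian computation in the proof of Theorem~\ref{theorem2}, produce a nonzero element in the kernel of the Jacobian of the system defining $I^{(E)}$ at $(\bxi,\bnu)$, contradicting the hypothesis that $(\bxi,\bnu)$ is an isolated (simple) root. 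Uniqueness of $(B,\bLambda)$ then follows from the closed-form expressions $\nu_{\balpha_k,\bbeta}=[\mM(\bnu)^{\bbeta}[1]]_{k+1}$ in terms of $\bnu$.
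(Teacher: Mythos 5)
The bulk of your proposal is a sound and honest reorganization of the argument: you show $\bxi\in V(I)$, extract $B,\bLambda$ from the normal form, establish orthogonality and the shape \eqref{Macbasis} via the induction $\mM(\bnu)^{\balpha_k}[1]=\e_k$, prove $\Lambda_k\in Q^{\perp}$ with a clean localization trick (which, combined with the support bound giving $\Nc_{\bxi,\bnu}(\m_\bxi^{o+1})=0$, is a nice substitute for the paper's invocation of the border-basis theorem from \cite{Mourrain99-nf,MourrainTrebuchet2005}), and get $\dim_{\C}\C[\bx]/Q\geq\delta$ from surjectivity of $\Phi$ and $Q\subset K$. All of this is fine.

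The gap is in the one step you yourself flag as delicate. You claim that $q\in K\setminus Q$ would ``produce a nonzero element in the kernel of the Jacobian of $I^{(E)}$ at $(\bxi,\bnu)$, contradicting the hypothesis that $(\bxi,\bnu)$ is an isolated (simple) root.'' But the theorem hypothesizes only that $(\bxi,\bnu)$ is \emph{isolated}, not simple --- the parenthetical ``(simple)'' is not there and cannot be assumed, since Theorem~\ref{theorem3} is a converse to Theorem~\ref{theorem2} for arbitrary admissible $E$, and an isolated solution of $I^{(E)}$ can perfectly well have a degenerate Jacobian. A nontrivial Jacobian kernel therefore does not contradict anything. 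What the paper actually does at this point is stronger and unavoidable: starting from a nonzero $\Lambda\in\DDD_\bxi\setminus\DDD$ of minimal order $t\leq o$ (equivalent, after dualizing, to your $q\in K\setminus Q$), it reduces $\Lambda$ modulo $\bLambda$, forms the perturbed dual family $\bLambda_c=(\Lambda_0,\ldots,\Lambda_{\delta-1}+c\Lambda)$, reads off from its derivation structure a family of parameter values $\bnu'(c)$ of the correct nilpotent upper-triangular form, and verifies $\Nc_{\bxi,\bnu'(c)}(\fb)=0$ --- exhibiting an entire \emph{line} $\{(\bxi,\bnu'(c)):c\in\C\}$ inside $V(I^{(E)})$ through $(\bxi,\bnu)$. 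Only an explicit positive-dimensional family like this contradicts isolatedness; a first-order (Jacobian) obstruction does not. You would need to replace the Jacobian sketch by this curve construction, or otherwise promote the kernel vector to an actual analytic branch, which is not automatic.
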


\begin{proof} 
Since $ \Nc_{\bx,\bmu} (f_{k})[1] =f_k$, we have  $f_1, \ldots, f_N\in
I^{(E)}$,  thus  $\xi\in V(I)$.
The monomial set $B=\{(\bx-\bxi)^{\alpha_i}\;:\; i=0, \ldots,
\delta-1\}$ is stable by derivation and thus connected to $1$ (i.e. if $m\in \xb^{E}$
and $m\neq 1$, there exists $m'\in \xb^{E}$ and $i\in [1,n]$ such
that $m=x_{i} m'$).
The matrices $\{\mM_i(\nu)\}$ associated to the rewriting  family
$${\mathcal F}:=\left\{(\bx-\bxi)^{\alpha_k+\e_j}-\sum_{i< k} \nu_{\alpha_i, \alpha_k+\e_j} (\bx-\bxi)^{\alpha_i}\; :\; \alpha_k\in E, \;\alpha_k+\e_j\not\in E\right\}
$$
are pairwise commuting. By \cite{Mourrain99-nf,
  MourrainTrebuchet2005}, ${\mathcal F}$ is a border basis for
$B$
and $B$ is a basis of $\C[x]/Q$ where 
 $Q:=( {\mathcal F})\subset \C[\bx]$.  In particular, $\dim \C[x]/Q=\delta$.
Since the matrices $\mM_i(\nu)$ are strictly lower triangular, the
elements of $\C[x]/Q$ are nilpotent, so $Q$ is a $\m_{\xi}$-primary
ideal.
By Lemma \ref{lem:normalform} the dual basis $\bLambda=(\Lambda_0, \ldots,
\Lambda_{\delta-1})$ is
\begin{eqnarray*}
&&\Lambda_i := \sum_{\gamma\in \N^n}[\mM(\nu)^{\bgamma}]_{i,1}\frac{1}{\gamma!}\partial_\xi^\gamma, \text{  using the identity }\\
&&\nu_{\balpha_i,\bgamma}= [\mM(\nu)^{\bgamma}]_{i,1} \text{ for all } \gamma\in \N^n, \; i=0, \ldots, \delta-1
\end{eqnarray*}
similarly as in (\ref{Macbasis}) and  (\ref{restnu}). By induction on the degree, we prove that for $|\gamma|>|\alpha_i|$ we have $[\mM(\nu)^{\bgamma}]_{i,1}=0$. Thus, $B$ and  $\bLambda$ satisfies the properties of Lemma~\ref{pdlemma}.

Let $\DDD:={\rm span}(\bLambda)$. Then $\DDD$ is stable under derivation since 
\begin{eqnarray*}
\d_{\partial_{j,\xi}}(\Lambda_i)&= &\d_{\partial_{i,\xi}}(\sum_{\gamma\in \N^n}[\mM(\nu)^{\bgamma}]_{i,1}\frac{1}{\gamma!}\partial_\xi^\gamma)=\sum_{\beta\in \N^n}[\mM_j(\nu)\mM(\nu)^{\bbeta}]_{i,1}\frac{1}{\beta!}\partial_\xi^\beta\\
&=&[\mM_j(\nu)]_{i,*}\cdot\left(\sum_{\beta\in \N^n}[\mM(\nu)^{\bbeta}]_{*,1}\frac{1}{\beta!}\partial_\xi^\beta\right)
=\sum_{k=0}^{i-1} [\mM_j(\nu)]_{i,k}\Lambda_k.
\end{eqnarray*}
This implies that $\DDD\subseteq Q^{\perp}$, and comparing dimensions we get equality, i.e.,
$$
q\in Q \quad \Leftrightarrow \quad \Lambda_i(q)=0\text{ for all } i=0, \ldots, \delta-1.
$$
Since $\Lambda_i(f_k)=\Nc_{\bxi,\bnu} (f_{k})[ i]=0$ for all $k=1,
\ldots, N$ and $i=0, \ldots, \delta-1$,~\mbox{$I\subset Q$}.  

Finally, we prove that $Q= I_{\xi}+ \m_{\xi}^{o+1}$.
As $\DDD$ is generated by elements of order~$\leq o$,
$\m_{\xi}^{o+1}\subset Q$. Thus, $I+ \m_{\xi}^{o+1} \subset  
Q$. Localizing at $\m_{\xi}$ yields $I_{\xi} + \m_{\xi}^{o+1} \subset Q$.

We prove now the reverse inclusion: $Q \subset I_{\xi}+ \m_{\xi}^{o+1}$.
Let $\DDD_{\xi}=I_{\xi}^{\perp}\subset \C[\bpartial_{\xi}]$.
Suppose that there exists an element of $\DDD_{\xi}$ of order $\leq
o$, which is not in $\DDD= Q^{\perp}$. Let $\Lambda$ be such a non-zero element of
$\DDD_{\xi}\setminus \DDD$ of smallest possible order $t \leq o$. 
As~$\xi \in V(I)$, we can assume that $t>0$. 
We are going to prove that $(\xi, \nu)$ is not an isolated solution.

By reduction by the basis $\Lambda_{i}$ of
$\DDD$, we can assume that the coefficients of
$\partial^{\alpha_i}_\xi$ are zero in $\Lambda$.  
Thus, for any parameter value $c \in\C$ we can replace $\bLambda$ by  
$$
\bLambda_c:=(\Lambda_0, \ldots,\Lambda_{\delta-1} +c\cdot \Lambda)
$$
so that $B$ and $\bLambda_c$ form a primal-dual pair.

As $t$ is minimal, we have $d_{\partial_{i,\xi}}(\Lambda) \in \DDD_{t-1}$ for all $i\in [1,n]$.  
Thus, there exist coefficients $\nu'_{i,j}$ such that
$$ 
\d_{\partial_{i,\xi}}(\Lambda) = \sum_{i} \nu'_{i,j} \Lambda_{j}.
$$
As $\Lambda$ is of order $t \leq  {\rm ord}(\Lambda_{\delta-1})$ and 
$d_{\partial_{i,\xi}}(\Lambda)$ is of order $<t$, the coefficients
$\nu'_{i,\delta-1}$ must vanish.
This shows that the matrix
$M_{i}^{t}(\nu')=(\bLambda_{c,,j}((\bx-\bxi)^{\alpha_{k}+\e_{i}}))$ is
a nilpotent upper triangular matrix of the form \eqref{Mmu}.

All the coefficients $\nu'_{i,j}$ cannot vanish otherwise $\Lambda$ is
a constant, which is excluded since $t>0$.   Thus for all $c\neq 0$,
the matrices $M_{i}(\nu')^{t}$ 
representing the operators $d_{\partial_{i,\xi}}$ in
the dual basis $\bLambda_{c}$, are distinct from 
$M_{i}(\nu)^{t}$.
These matrices are commuting, since the derivations
$d_{\partial_{i,\xi}}$ commute.
Moreover, for any $\alpha\in \N^{n}$, we have
$$ 
\bLambda_{c} ((\bx-\bxi)^{\alpha}) =
(\bx-\bxi)^{\alpha} \cdot\bLambda_{c} (1)
=  \langle \mM^{t}({\bnu'})^{\alpha}[\bLambda_{c}], [1]\rangle
=  \langle [\bLambda_{c}],  \mM({\bnu'})^{\balpha}[1]\rangle.
$$
As $\Lambda(\fb)=0$, we deduce that 
$\bLambda_{c}(\fb)=0$, $\fb(\bxi+\mM({\bnu'}))[1]=0$ and $\Nc_{\xi,\bnu'}(\fb)=0$.

Therefore, the solution set of the system $I^{(E)}$ contains
$(\xi,\nu')$ for all $c\neq 0$, that is the line through the points $(\xi,\nu)$,  $(\xi,\nu')$, which
implies that $(\xi,\nu)$ is not isolated.
We deduce that if $(\xi,\nu)$ is isolated, then
$\DDD_{\xi,o} \subset \DDD_{o}=\DDD$, that is
$$ 
(I_{\xi}+ \m_{\xi}^{o+1})^{\perp}\subset Q^{\perp}
$$
or equivalently, $Q\subset I_{\xi}+ \m_{\xi}^{o+1}$.
\end{proof}

This theorem implies, in particular, that if $\bxi$ is an isolated root
of $I$ and $o$ is its order, then $Q=I_{\bxi}$ is the
primary component of $I$ associated to $\bxi$.

The following example illustrates that if $\bxi$ is not an isolated root of $I$, but an embedded point, 
then the primary ideal $Q$ in Theorem~\ref{theorem3} may differ from the primary ideal in the primary decomposition of $I$ corresponding to $\xi$.

\begin{example} We consider the ideal $I=\left(x^{2},xy\right)$ with primary decomposition $I= \left( x\right)\cap
 \left(  x^{2},y\right)$, which corresponds to a simple line
$V( x)$ with an embedded point $V(x^{2},y)$ of multiplicity $2$ at $\xi=(0,0)$. With $E_0:=\{(0,0),(1,0)\}$ corresponding to the primal basis $\{1, x\}$, we get parametric multiplication matrices
  $$
  M^{t}_{x}=\left(
  \begin{array}{cc}
    0 & 1 \\ 
    0 & 0  
  \end{array}
\right),~~
M^{t}_{y} =
\left(
  \begin{array}{ccc}
    0 & \mu \\ 
    0 & 0   \end{array}
  \right)
  $$ which are commuting. The parametric normal form  is $$f \mapsto \Nc(f)=[f(\xb), \partial_{x}(f)(\xb)+\mu\partial_{y}(f)(\xb)], $$ so the $E_0$-deflated ideal is   
  $I^{(E_0)}=\left( x^{2}, 2x, xy, y+\mu x\right)=\left( x,y\right)\subset\C[x,y,\mu]$, but $(0,0)$ corresponds to a positive dimensional component  $\{ (0,0,\mu):\mu\in \C \}$ of $I^{(E_0)}$.  \\
   For $E_1=\{(0,0),(1,0),(0,1)\}$ corresponding to the primal basis $\{1, x, y \}$, 
the parametric multiplication matrices are constant
and obviously commute:
$$ 
M^{t}_{x} =
\left(
  \begin{array}{ccc}
    0 & 1 & 0\\ 
    0 & 0 & 0\\ 
    0 & 0 & 0\\ 
  \end{array}
\right),~~
M^{t}_{y} =
\left(
  \begin{array}{ccc}
    0 & 0 & 1\\ 
    0 & 0 & 0\\ 
    0 & 0 & 0\\ 
  \end{array}
  \right).
$$
The parametric normal form is
$f \mapsto \Nc(f)=[f(\xb), \partial_{x}(f)(\xb), \partial_{y}(f)(\xb)]$.

The $E_1$-deflated  ideal $I^{(E_1)}=\left( x^{2}, x, 0, xy, y, x\right)=\left( x,y\right) \subset\C[x,y]$. 
It defines the (smooth) isolated point $\bxi = (0,0)$ and the
associated $\left( x,y\right)$-primary ideal is 
$$ 
Q=\langle 1, \partial_{x}, \partial_{y}\rangle^{\perp}=\left( x^{2},x y,
y^{2}\right) =  I+ \left( x,y\right)^{2} = \left( x,y\right)^{2}\neq  \left( x^{2},y\right).
$$
Similarly, if $E_k:=\{(0,0), (1,0), (0,1), \ldots,(0,k)\}$ corresponding to the primal basis $\{1,x,y,\ldots, y^k\}$, we get that $V(I^{(E_k)})$ is an isolated  simple point with projection $(0,0)$, and the corresponding primary ideal is
$$Q=\langle 1, \partial_{x}, \partial_{y}, \ldots, \partial_y^k\rangle^{\perp}=I+ \left( x,y\right)^{k}=\left( x^2,y\right)  \cap  \left( x,y^{k+1} \right)  \neq  \left(x^{2},y\right).$$
\end{example}
 
\section{Examples}\label{Sec:Examples}
Computations for the following examples, as well as several other 
systems, along with \textsc{Matlab} code can be found at
\url{www.nd.edu/~jhauenst/deflation/}.

\subsection{Caprasse system}\label{Sec:Caprasse}
\noindent We first consider the Caprasse system \cite{Caprasse88,Posso98}:
{\scriptsize
$$
\begin{array}{l}
f(x_1,x_2,x_3,x_4) = 
\left[
\begin{array}{l} 
{x_{{1}}}^{3}x_{{3}}-4\,x_{{1}}{x_{{2}}}^{2}x_{{3}}-4\,{x_{{1}}}^{2}x_{{2}}x_{{4}}-2\,{x_{{2}}}^{3}x_{{4}}-4\,{x_{{1}}}^{2}+10\,{x_{{2}}}^{2}- 4\,x_{{1}}x_{{3}}+10\,x_{{2}}x_{{4}}-2\\
x_{{1}}{x_{{3}}}^{3}-4\,x_{{2}}{x_{{3}}}^{2}x_{{4}}-4\,x_{{1}}x_{{3}}{x_{{4}}}^{2}-2\,x_{{2}}{x_{{4}}}^{3}-4\,x_{{1}}x_{{3}}+10\,x_{{2}}x_{{4}}- 4\,{x_{{3}}}^{2}+10\,{x_{{4}}}^{2}-2\\
{x_{{2}}}^{2}x_{{3}}+2\,x_{{1}}x_{{2}}x_{{4}}-2\,x_{{1}}-x_{{3}}\\
{x_{{4}}}^{2}x_{{1}}+2\,x_{{2}}x_{{3}}x_{{4}}-2\,x_{{3}}-x_{{1}}
\end{array}\right].
\end{array}
$$
}
The following is a multiplicity $4$ root:
$$
\bxi=(\xi_1, \ldots, \xi_4)=\left( -\frac{2\cdot i}{\sqrt{3}},  -\frac{ i}{\sqrt{3}},  \frac{2\cdot i}{\sqrt{3}} ,\frac{ i}{\sqrt{3}}\right)\in \C^4
$$
of multiplicity 4.

We analyze first the methods for deflating the root $\bxi$.
Using the approaches of \cite{DayZen2005,HauWam13,lvz06}, one iteration suffices.
For example, using an extrinsic and intrinsic version of \cite{DayZen2005,lvz06},
the resulting system consists of 10 and 8 polynomials, respectively,
and 8 and 6 variables, respectively.
Following \cite{HauWam13}, using all minors results in a system
of 20 polynomials in 4 variables which can be reduced to
a system of 8 polynomials in 4 variables using the $3\times3$ minors
containing a full rank~$2\times2$~submatrix.
The approach of \S~\ref{Sec:Deflation} using an $|\bm i|=1$
step creates a deflated system consisting 
of $6$ polynomials in $4$ variables.  
In fact, since the null space of the Jacobian at the root
is $2$ dimensional, adding two polynomials is necessary and sufficient.

We illustrate now the second method, for computing the
multiplicity structure.
The primal basis of  $\bxi$ is given by 
$$B=\{1, x_1-\xi_1, x_2-\xi_2, (x_1-\xi_1)^2\},\quad  \mathrm {with}\ 
E=\{(0,0),\,(1,0),\,(0,1),\,(2,0)\},
$$
 and its orthogonal dual basis has the following structure. 
{\small\begin{eqnarray*}
\Lambda_0&=&1,\\
\Lambda_1&=&\partial_{x_1}+\nu_{x_1, x_3}\partial_{x_3} + \nu_{x_1, x_4}\partial_{x_4},\\
\Lambda_2&=&\partial_{x_2}+\nu_{x_2, x_3}\partial_{x_3} + \nu_{x_2, x_4}\partial_{x_4},\\
\Lambda_3&=&\partial_{x_1^2}/2+ \nu_{x_1^2, x_3}\partial_{x_3}+\nu_{x_1^2, x_4}\partial_{x_4}+\nu_{x_1^2, x_1x_2}\partial_{x_1x_2}
\\&+&\nu_{x_1^2, x_1x_3}\partial_{x_1x_3}+ \nu_{x_1^2, x_1x_4}\partial_{x_1x_4}+\nu_{x_1^2, x_2^2}\partial_{x^2_2}/2\\
&+&\nu_{x_1^2, x_2x_3}\partial_{x_2x_3}+\nu_{x_1^2, x_2x_4}\partial_{x_2x_4}+\nu_{x_1^2, x_3^2}\partial_{x_3^2}/2\\
&+&\nu_{x_1^2, x_3x_4}\partial_{x_3x_4}+\nu_{x_1^2, x_4^2}\partial_{x_4^2}/2.
\end{eqnarray*}
Computing the kernel of the  Macaulay multiplicity matrix 
$$
{\rm Mac}_{d}(\fb, \bxi):=\left[\bpartial_\bxi^\balpha\left(\bx^{\bbeta}f_i(\bx)\right)\right]_{|\bbeta|<d, 1\leq i\leq N,|\balpha|\leq d}.
$$
for $d=2$ (of size $20\times 15$), we get the unique solution 
{\small\begin{eqnarray}\label{nusol}
&&\nu_{x_1, x_3}=-1, \nu_{x_1, x_4}=0, \nu_{x_2, x_3}=1, \nu_{x_2, x_4}=1, \nonumber
\\&&\nu_{x_1^2, x_3}=\frac{\sqrt{3}\cdot i}{8}, 
\nu_{x_1^2, x_4}=\frac{\sqrt{3}\cdot i}{4}, \nu_{x_1^2, x_1x_2}=-\frac{1}{4},\\&& \nu_{x_1^2, x_1x_3}=- \frac{5}{4}, \nu_{x_1^2, x_1x_4}=-\frac{1}{4}, \nu_{x_1^2, x_2^2}=-\frac{1}{2},\nu_{x_1^2, x_2x_3}=-\frac{1}{4},\nonumber
 \\&&\nu_{x_1^2, x_2x_4}=-\frac{1}{2}, \nu_{x_1^2, x_3^2}=1, \nu_{x_1^2, x_3x_4}=-\frac{1}{4}, \nu_{x_1^2, x_4^2}=-\frac{1}{2}.\nonumber
\end{eqnarray}}}

The system of  parametric multiplication matrices corresponding to  $E$ is given by
{\scriptsize\begin{eqnarray*}
&{\mM}_{1}(\mu)^t\, = \, \left[ \begin {array}{cccc} 0&1&0&0\\ \noalign{\medskip}0&0&0&1\\ \noalign{\medskip}0&0&0&\mu_{{x_1^2, x_1x_2}}\\ \noalign{\medskip}0&0&0&0\end {array} \right] ,\quad 
 & {\mM}_{{2}}(\mu)^t\, = \, \left[ \begin {array}{cccc} 0&0&1&0\\ \noalign{\medskip}0&0&0&\mu_{{x_1^2, x_1x_2}}\\ \noalign{\medskip}0&0&0&\mu_{{x_1^2, x^2_2}}\\ \noalign{\medskip}0&0&0&0\end {array} \right],\\
&{\mM}_{{3}}(\mu)^t\, = \, \left[ \begin {array}{cccc} 0&\mu_{{x_1,x_3}}&\mu_{{x_2,x_3}}&\mu_{{x_1^2, x_3}}\\ \noalign{\medskip}0&0&0&\mu_{{x_1^2, x_1x_3}}\\ \noalign{\medskip}0&0&0&\mu_{{x_1^2,x_2x_3}}\\ \noalign{\medskip}0&0&0&0\end {array} \right] , \quad
&
 {\it \mM}_{{4}}(\mu)^t\, = \, \left[ \begin {array}{cccc} 0&\mu_{{x_1,x_4}}&\mu_{{x_2,x_4}}&\mu_{{x_1^2, x_4}}\\ \noalign{\medskip}0&0&0&\mu_{{x_1^2, x_1x_4}}\\ \noalign{\medskip}0&0&0&\mu_{{x_1^2, x_2x_4}}\\ \noalign{\medskip}0&0&0&0\end {array} \right] .
\end{eqnarray*}}
Note that $\mu_{x_1^2, x_3^2},\, \mu_{x_1^2, x_3x_4},\, \mu_{x_1^2, x_4^2}$ do not appear in these multiplication matrices. Each of these matrices are nilpotent, and one can check that the maximal non-zero products of them have  degree 2. To obtain the polynomial system in (\ref{overdet}), we first have to compute 
$$\Nc_{\bx, \bmu}(f_i)=  \sum_{\bgamma}
\frac{1}{\bgamma!} \bpartial_{\bx}^{\bgamma}(f_i) \, \mM({\mu})^{\bgamma}[1]\in \Q[\bx, \mu]^4.$$ 
Note that $\Nc_{\bx, \bmu}(f_i)[1]=f_i$ since the only time the $[1,1]$ entry in $\mM(\mu)^\gamma$ is not zero is when $\gamma=0$. The other entries of $\Nc_{\bx, \bmu}(f_i)$ depend on the $\mu$ variables, 
for example 
{\small\begin{eqnarray*}
\Nc_{\bx, \bmu}(f_1)[4]&=&\left( {x_{{1}}}^{3}-4\,x_{{1}}{x_{{2}}}^{2}-4\,x_{{1}} \right) \mu_{{x_1^2, x_3}}+ \left( -4\,{x_{{1}}}^{2}x_{{2}}-2\,{x_{{2}}}^{3}+10\,x_{{2}} \right) \mu_{{x_1^2, x_4}}\\
&&+3\,x_{{1}}x_{{3}}-4\,x_{{2}}x_{{4}}-4
+\left( -8\,x_{{1}}x_{{4}}-8\,x_{{2}}x_{{3}} \right) \mu_{{x_1^2, x_1x_2}} \\&&
+ \left( 3/2\,{x_{{1}}}^{2}-2\,{x_{{2}}}^{2}-2 \right) \mu_{{x_1,x_3}}+ \left( 3/2\,{x_{{1}}}^{2}-2\,{x_{{2}}}^{2}-2 \right) \mu_{{x_2,x_3}} \mu_{{x_1^2, x_1x_2}}
\\&&-4\,x_{{1}}x_{{2}}\mu_{{x_1,x_4}}-4\,x_{{1}}x_{{2}}\mu_{{x_2,x_4}} \mu_{{x_1^2, x_1x_2}}+ \left( -4\,x_{{1}}x_{{3}}-6\,x_{{2}}x_{{4}}+10 \right)  \mu_{{x_1^2,x_2^2}}\\&&
-4\,x_{{1}}x_{{2}}\mu_{{x_1,x_3}} \mu_{{x_1^2, x_1x_2}}
-4\,x_{{1}}x_{{2}}\mu_{{x_2,x_3}} \mu_{{x_1^2,x_2^2}}\\&&
+ \left( -2\,{x_{{1}}}^{2}-3\,{x_{{2}}}^{2}+5 \right) \mu_{{x_1,x_4}}\mu_{{x_1^2, x_1x_2}}\\&&+ \left( -2\,{x_{{1}}}^{2}-3\,{x_{{2}}}^{2}+5 \right) \mu_{{x_2,x_4}}\mu_{{x_1^2, x_2^2}}
\\&&+ \left( 3/2\,{x_{{1}}}^{2}
-2\,{x_{{2}}}^{2}-2 \right) \mu_{{x_1^2, x_1x_3}}-4\,x_{{1}}x_{{2}}\mu_{{x_1^2, x_2x_3}}-4\,x_{{1}}x_{{2}}\mu_{{x_1^2, x_1x_4}}\\&&+ \left( -2\,{x_{{1}}}^{2}-3\,{x_{{2}}}^{2}+5 \right) \mu_{{x_1^2, x_2x_4}}.\\
\end{eqnarray*}}
 Note that this polynomial is clearly not equal to $\Lambda_3(\bx^\alpha f_1)$ for any $\alpha$, which would be linear in the $\mu$ variables.
 
 The commutator relations appearing in (\ref{overdet}) contain polynomials such as 
 $$
 \mu_{x_1^2, x_2x_3}- \mu_{x_1, x_3} \mu_{x_1^2, x_1x_2}+ \mu_{x_2, x_3} \mu_{x_1^2, x_2^2}, 
 $$
 which is    the only non-zero entry in $\mM_{2}\mM_{3}-\mM_{3}\mM_{2}$. 
 
 Using an elimination order, we computed the following Gr\"obner basis for the $E$-deflated ideal $I^{(E)}$ generated by the polynomials in (\ref{overdet}):
{\small 
\begin{eqnarray*}
&&3\,{x_{{4}}}^{2}+1,\;3\,{x_{{3}}}^{2}+4,\;x_{{4}}+x_{{2}},x_{{3}}+x_{{1}},
\mu_{{x_1,x_3}}+1,\;\mu_{{x_1,x_4}},\; \\
&&
\mu_{{x_2,x_4}}-1,\; \,2\mu_{{x_2,x_3}}+3\,x_{{3}}x_{{4}},
2\,\mu_{{x_1^2,x_2x_4}}+1,\; 8\,\mu_{{x_1^2,x_1x_4}}-3\,x_{{3}}x_{{4}},\;4\,\mu_{{x_1^2,x_4}}-3\,x_{{4}},\; \\
&&
8\,\mu_{{x_1^2,x_2x_3}}-3\,x_{{3}}x_{{4}},\;4\,\mu_{{x_1^2,x_1x_3}}+5,\;16\,\mu_{{x_1^2,x_3}}-3\,x_{{3}},\;2\,\mu_{{x_1^2,x_2^2}}+1,\;8\,\mu_{{x_1^2,x_1x_2}}-3\,x_{{3}}x_{{4}}.
\end{eqnarray*}}
 At $\bx = \bxi=\left( -\frac{2\cdot i}{\sqrt{3}},  -\frac{ i}{\sqrt{3}},  \frac{2\cdot i}{\sqrt{3}} ,\frac{ i}{\sqrt{3}}\right)$ this gives the same solution  $\mu=\nu$ as in (\ref{nusol}).

 \subsection{A family of examples}
\noindent In this section, we consider a modification of \cite[Example
3.1]{LiZhi2013}, defining multiple points with breadth $2$. For any $n\geq 2$, the following system has $n$ polynomials, each of degree at most $3$, in $n$ variables:
\begin{eqnarray*} 
x_1^3+x_1^2-x_2^2, \;x_2^3+x_2^2-x_3, \ldots, x_{n-1}^3+x_{n-1}^2-x_n, \;x_n^2.
\end{eqnarray*}
The origin is a multiplicity $\delta:=2^n$ root having breadth $2$ (i.e., the 
corank of Jacobian at the origin is $2$). 

We apply our parametric normal form method described in \S~\ref{Sec:PointMult}. Similarly as in Remark \ref{reduce}, we can reduce the number of free parameters to be at most $(n-1)(\delta-1)$ using the structure of the primal basis $B=\{x_1^ax_2^b:a<2^{n-1}, \;  b<2\}$.

The following table shows the multiplicity, number of
variables and polynomials in the deflated system, and the time (in
seconds) it took to compute this system (on a iMac, 3.4 GHz Intel Core i7 processor, 8GB 1600Mhz DDR3 memory).
Note that when comparing our method to an approach using the
null spaces of Macaulay multiplicity matrices  (see for example \cite{DayZen2005,lvz08}), we found  that for $n\geq 4$ the deflated system derived from the Macaulay multiplicity matrix was too large to compute. This is because  the nil-index at the origin is $2^{n-1}$, so the size of the Macaulay multiplicity matrix is  $\;n\cdot{{2^{n-1}+n-1}\choose{n-1}}\times{{2^{n-1}+n}\choose{n}}$.  
{
$$\begin{array}{|c|c|c|c|c|c|c|c|}
\hline
\multicolumn{2}{|c|}{} &\multicolumn{3}{|c|}{\hbox{New approach}} & \multicolumn{3}{|c|}{\hbox{Null space}} \\
\hline
n & \hbox{mult} & \hbox{vars} & \hbox{poly} & \hbox{time} & \hbox{vars} & \hbox{poly} & \hbox{time}\\
\hline
2 & 4 & 5 & 9 & 1.476 &8&17&2.157\\
\hline
3 & 8 & 17 & 31 & 5.596&192&241&208 \\
\hline
4 & 16 & 49 & 100 & 19.698 &7189 &19804&>76000\\
\hline
5 & 32 & 129 & 296 & 73.168&N/A&N/A&N/A \\
\hline
6 & 64 & 321 & 819 & 659.59 &N/A&N/A&N/A\\
\hline
\end{array}$$}

\subsection{Examples with multiple iterations}\label{Sec:MultipleIterations}
\noindent In our last set of examples, we consider simply deflating a root of the last three systems 
from \cite[\S~7]{DayZen2005}
and a system from \cite[\S~1]{Lecerf02}, each of which 
required more than one iteration to deflate.  
These four systems and corresponding points are:
{\small\begin{itemize}
\item[1:] $\{x_1^4 - x_2 x_3 x_4, x_2^4 - x_1 x_3 x_4, x_3^4 - x_1 x_2 x_4, x_4^4 - x_1 x_2 x_3\}$ at $(0,0,0,0)$ with $\mult = 131$ and $o = 10$;
\item[2:] $\{x^4, x^2 y + y^4, z + z^2 - 7x^3 - 8x^2\}$ at $(0,0,-1)$ with $\mult = 16$ and $o = 7$;
\item[3:] $\{14x + 33y - 3\sqrt{5}(x^2 + 4xy + 4y^2 + 2) + \sqrt{7} + x^3 + 6x^2y + 12xy^2 + 8y^3, 41x - 18y - \sqrt{5} + 8x^3 - 12x^2y + 6xy^2 - y^3 + 3\sqrt{7}(4xy - 4x^2 - y^2 - 2)\}$ at $Z_3 \approx (1.5055, 0.36528)$ with $\mult = 5$ and $o = 4$;
\item[4:] $\{2x_1 + 2x_1^2 + 2x_2 + 2x_2^2 + x_3^2 - 1, 
\mbox{$(x_1 + x_2 - x_3 - 1)^3-x_1^3$}, \\
(2x_1^3 + 5x_2^2 + 10x_3 + 5x_3^2 + 5)^3 - 1000 x_1^5\}$ at
$(0,0,-1)$ with $\mult = 18$ and $o = 7$.
\end{itemize}}

We compare using the following four methods:
  (A) intrinsic slicing version of \cite{DayZen2005,lvz06};
  (B) isosingular deflation \cite{HauWam13} via a maximal rank submatrix;
  (C) ``kerneling'' method in \cite{GiuYak13};
  (D) approach of \S~\ref{Sec:Deflation} using an $|\bm i|=1$ step.
We performed these methods without the use of preprocessing and postprocessing
as mentioned in \S~\ref{Sec:Deflation} to directly compare the 
number of nonzero distinct polynomials, variables, and iterations
for each of these four deflation methods.
\vskip -0.05in
{
$$
  \begin{array}{|c|c|c|c|c|c|c|c|c|c|c|c|c|}
\hline
 & 
 \multicolumn{3}{|c|}{\hbox{Method A}} & 
 \multicolumn{3}{|c|}{\hbox{Method B}} &
 \multicolumn{3}{|c|}{\hbox{Method C}} & 
 \multicolumn{3}{|c|}{\hbox{Method D}}\\
\cline{2-13}
& 
\hbox{Poly} & \hbox{Var} & \hbox{It} &
\hbox{Poly} & \hbox{Var} & \hbox{It} &
\hbox{Poly} & \hbox{Var} & \hbox{It} &
\hbox{Poly} & \hbox{Var} & \hbox{It} \\
\hline
1 & 16 & 4 & 2 & 22 & 4 & 2 & 22 & 4 & 2 & 16 & 4 & 2 \\
\hline
2 & 24 & 11 & 3 & 11 & 3 & 2 & 12 & 3 & 2 & 12 & 3 & 3 \\
\hline
3 & 32 & 17 & 4 & 6 & 2 & 4 & 6 & 2 & 4 & 6 & 2 & 4 \\
\hline
4 & 96 & 41 & 5 & 54 & 3 & 5 & 54 & 3 & 5 & 22 & 3 & 5 \\
\hline
\end{array}$$}%
For breadth one singular points as in system 3, methods B, C, and D yield
the same deflated system.
Except for methods B and C on the second system, all four methods required the same number of iterations to deflate the root.  
For the first and third systems, our new approach matched
the best of the other methods and resulted in a 
significantly smaller deflated system for~the~last~one.


\def\cprime{$'$} \def\cprime{$'$} \def\cprime{$'$}
\def\sameauthors{------.~}

\end{document}